\documentclass{amsart}

\usepackage{amsmath, amssymb}
\input xy
\xyoption{all}
\begin{document}

\newtheorem{theorem}{Theorem}[section]
\newtheorem{lemma}[theorem]{Lemma}
\newtheorem{corollary}[theorem]{Corollary}
\newtheorem{fact}[theorem]{Fact}
\newtheorem{proposition}[theorem]{Proposition}
\newtheorem{claim}[theorem]{Claim}
\theoremstyle{definition}
\newtheorem{example}[theorem]{Example}
\newtheorem{remark}[theorem]{Remark}
\newtheorem{definition}[theorem]{Definition}
\newtheorem{question}[theorem]{Question}

\def\span{\operatorname{span}}
\def\cb{\operatorname{Cb}}
\def\tp{\operatorname{tp}}
\def\stp{\operatorname{stp}}
\def\acl{\operatorname{acl}}
\def\alg{\operatorname{alg}}
\def\dcl{\operatorname{dcl}}
\def\eq{\operatorname{eq}}
\def\loc{\operatorname{loc}_\Delta}
\def\C{\mathcal C}
\def\jet{\operatorname{Jet}}
\def\alg{\operatorname{alg}}
\def\rank{\operatorname{rank}}
\def\p{\bf P}
\def\PP{\mathbb{P}}
\def\UU{\mathbb{U}}
\def\aut{\operatorname{Aut}}
\def\dcf{\operatorname{DCF}_0}
\def\Gm{\mathbb G_{\operatorname m}}
\def\Ga{\mathbb G_{\operatorname a}}


\def\Ind#1#2{#1\setbox0=\hbox{$#1x$}\kern\wd0\hbox to 0pt{\hss$#1\mid$\hss}
\lower.9\ht0\hbox to 0pt{\hss$#1\smile$\hss}\kern\wd0}
\def\ind{\mathop{\mathpalette\Ind{}}}
\def\Notind#1#2{#1\setbox0=\hbox{$#1x$}\kern\wd0\hbox to 0pt{\mathchardef
\nn=12854\hss$#1\nn$\kern1.4\wd0\hss}\hbox to
0pt{\hss$#1\mid$\hss}\lower.9\ht0 \hbox to
0pt{\hss$#1\smile$\hss}\kern\wd0}
\def\nind{\mathop{\mathpalette\Notind{}}}

\title[Jet spaces preserve internality to the constants]{Differential-algebraic jet spaces preserve internality to the constants}
\author{Zoe Chatzidakis}
\address{Zoe Chatzidakis\\
\'Equipe de Logique Math\'ematique\\
Institut de Math\'ematiques de Jussieu - Paris Rive Gauche\\
Universit\'e Paris Diderot\\
UFR de mathŽmatiques case 7012\\
75205 Paris Cedex 13\\
France}
\email{zoe@math.univ-paris-diderot.fr}

\author{Matthew Harrison-Trainor}
\address{Matthew Harrison-Trainor\\
University of California, Berkeley\\
Department of Mathematics\\
970 Evans Hall\\
Berkeley, CA 94720-3840\\
USA}
\email{mattht@math.berkeley.edu}

\author{Rahim Moosa}
\address{Rahim Moosa\\
University of Waterloo\\
Department of Pure Mathematics\\
200 University Avenue West\\
Waterloo, Ontario \  N2L 3G1\\
Canada}
\email{rmoosa@uwaterloo.ca}

\thanks{Zo\'e Chatzidakis was partially supported by the MODIG grant ANR-09-BLAN-0047}

\thanks{Matthew Harrison-Trainor was partially supported by an NSERC USRA}

\thanks{Rahim Moosa was partially supported by an NSERC Discovery Grant as well as the MODIG grant ANR-09-BLAN-0047.
He would also like to thank the \'Equipe de Logique Math\'ematique of the Institut de Math\'ematiques de Jussieu at the Universit\'e Paris Diderot for their generous hospitality during his stay in May 2013 when some of the work presented here was completed}

\date{November 5th, 2013}

\begin{abstract}
Suppose $p$ is the generic type of a differential-algebraic jet space to a finite dimensional differential-algebraic variety at a generic point.
It is shown that $p$ satisfies a certain strengthening of almost internality to the constants.
This strengthening, which was originally called ``being Moishezon to the constants" in~\cite{moishezon} but is here renamed {\em preserving internality to the constants}, is a model-theoretic abstraction of the generic behaviour of jet spaces in complex-analytic geometry.
An example is given showing that only a generic analogue holds in the differential-algebraic case: there is a finite dimensional differential-algebraic variety $X$ with a subvariety $Z$ that is internal to the constants, such that the restriction of the differential-algebraic tangent bundle of $X$ to $Z$ is not almost internal to the constants.
\end{abstract}

\maketitle

\section{Introduction}

\noindent
This paper has to do with the fine structure of finite dimensional definable sets in differentially closed fields of characteristic zero.
A somewhat new and powerful tool in the study differential-algebraic varieties is the {\em differential jet space}.
This higher order analogue of Kolchin's differential tangent space was introduced by Pillay and Ziegler in~\cite{pillayziegler03} where it was used to prove what is now called the Canonical Base Property; a strong property which, among other things, gives a quick and Zariski-geometry-free proof of the Zilber dichotomy for differentially closed fields.
Here we study differential jet spaces in their own right, and prove that they satisfy a certain strengthening of internality to the constants introduced implicitly by the third author and Pillay in~\cite{cbp}, and then refined and formalised in~\cite{moishezon}.
This strengthening of internality is the differential analogue of a property that complex-analytic jet spaces enjoy, and went provisionally by the name ``being Moishezon" in~\cite{moishezon}.
However, in retrospect we find the term misleading and would like to rename it here as follows:

\begin{definition}
Work in a sufficiently saturated model $\overline M^{\operatorname{eq}}$ of a complete stable theory, and suppose ${\bf P}$  is an $\aut(\overline M)$-invariant set of partial types.
We say that a stationary type $\tp(a/b)$ {\em preserves ${\bf P}$-internality} if whenever $c$ is such that $\stp(b/c)$ is almost ${\bf P}$-internal, then so is $\stp(a/c)$.
\end{definition}

By taking $c=b$ we see that this is a strengthening of almost ${\bf P}$-internality.
See Proposition~2.4 of~\cite{moishezon} for a list of its basic properties.

Here is our main theorem.

\begin{theorem}
\label{jets-moishezon}
If $X$ is  a finite dimensional differential-algebraic variety then internality to the constant field is preserved by the generic types of the differential jet spaces to $X$ at generic points.

More concretely, working in a sufficiently saturated partial differentially closed field $(K,\Delta)$ of characteristic zero, suppose that $X$ is  a finite dimensional irreducible $\Delta$-variety defined over a $\Delta$-field $k$,  $a\in X$ is generic over $k$, and $v\in\jet^m_{\Delta}(X)_a$ is generic over $k\langle a\rangle$ for some $m>0$.
Then for any algebraically closed $\Delta$-field $L$ extending $k$, if $\tp(a/L)$ is almost internal to the constant field then so is $\tp(v/L)$.
\end{theorem}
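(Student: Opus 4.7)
The plan is to use the almost $\C$-internality of $\tp(a/L)$ to exhibit $a$ as coming from a constant $\Delta$-variety, transfer this structure via functoriality to the differential jet space, and then descend back to $L$.

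First, using saturation and a standard forking argument, I would choose an algebraically closed $\Delta$-field $L'\supseteq L$ with $L'\ind_L\{a,v\}$, together with a tuple $\bar c$ of $\Delta$-constants such that $a$ and $\bar c$ are interalgebraic over $L'$. This is possible because $\tp(a/L)$ is almost $\C$-internal: starting from any witness $L_0\supseteq L$ of the internality (so $L_0\ind_L a$ and $a\in\acl(L_0,\C)$), apply an automorphism fixing $L\cup\{a\}$ to move $L_0$ so that it becomes independent of $v$ over $L\cup\{a\}$, and then take $\bar c$ of minimal length with $a\in\acl(L',\bar c)$. Interpreting the interalgebraicity of $a$ and $\bar c$ over $L'$ geometrically yields an algebraic variety $Y_0$ over $L'$ together with $\bar c\in Y_0(\C)$ a generic point over $L'$, and a generically \'etale $\Delta$-rational correspondence over $L'$ between the constant $\Delta$-variety $Y:=Y_0(\C)$ and $X$ that relates $\bar c$ to $a$.

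Next, applying the Pillay-Ziegler differential jet space functor to this correspondence and evaluating at $(\bar c,a)$ produces a $\Delta$-rational isomorphism $\jet^m_\Delta(Y)_{\bar c}\cong \jet^m_\Delta(X)_a$ over $L'\langle a,\bar c\rangle$. Since $Y$ is a constant variety, the source $\jet^m_\Delta(Y)_{\bar c}$ is naturally identified with the $\C$-points of the algebraic jet space $\jet^m_{\alg}(Y_0)_{\bar c}$, so it is a pure $\C$-vector space. The preimage $w$ of $v$ under the isomorphism is therefore a tuple of $\Delta$-constants, whence $v\in\acl(L',a,\bar c,w)\subseteq\acl(L',\C)$. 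Since $v\ind_L L'$ by construction, a standard stability-theoretic fact (a non-forking extension of a stationary type is almost $\C$-internal if and only if the type itself is) yields that $\tp(v/L)$ is almost $\C$-internal, as desired.

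The main obstacle is the geometric content of the first step: translating the abstract almost-internality of $\tp(a/L')$ into an explicit generically \'etale $\Delta$-rational correspondence with a constant variety, and verifying that the Pillay-Ziegler jet space construction is functorial with respect to such correspondences so that the induced map on differential jet spaces is an isomorphism. The former relies on the standard dictionary in $\dcf$ between almost-internality to the constants and rational interalgebraicity with constant tuples, together with the minimality choice of $\bar c$ to obtain \'etaleness; the latter is a local-geometric statement whose algebraic analogue is routine but which requires verification in the differential setting, in particular that the \'etaleness of a $\Delta$-rational map at a point suffices to induce an isomorphism on Pillay-Ziegler jet spaces at that point.
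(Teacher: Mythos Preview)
Your argument has a genuine gap. For the correspondence $\Gamma := \loc(a,\bar c/L')$ to project $\Delta$-dominantly onto $X$---which is what you need for Fact~\ref{jetfacts}(c) to give a surjection $\jet^m_\Delta(\Gamma)_{(a,\bar c)} \twoheadrightarrow \jet^m_\Delta(X)_a$---the point $a$ must be generic in $X$ over $L'$. But the hypothesis only gives $a$ generic over $k$; since $L \supseteq k$ is arbitrary, $\loc(a/L)$ may well be a proper $\Delta$-subvariety of $X$, and then so is $\loc(a/L')$ (as $L' \supseteq L$). In that situation your correspondence projects only onto $\loc(a/L')$, and the induced jet map has image contained in the possibly proper subspace $\jet^m_\Delta(\loc(a/L'))_a \subsetneq \jet^m_\Delta(X)_a$. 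The generic $v$ of the larger space need not lie there, so you cannot conclude $v \in \acl(L',\C)$.

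This is precisely the difficulty the paper's proof is organised around. What you have written is in essence the argument of Lemma~\ref{orthogonal}, which runs the same correspondence-and-jet idea but under the extra hypothesis that $\stp(a/b)$ is almost \emph{orthogonal} to $\C$; that orthogonality is exactly what forces the independence $a \ind c$ (equation~(\ref{3}) there), so that $\loc(a/c) = X$ and the projection is dominant. To reach the general case the paper first invokes the Canonical Base Property to reduce to $b \in \acl(a)$, then uses a refined choice of internality witness (Lemma~\ref{goode}) to split off an orthogonal-to-$\C$ part $e$ from a pure-constant part $c$, handles each piece separately, and recombines using a fibred-product lemma (Lemma~\ref{fibreproduct}, resting on Proposition~\ref{prod} about jets of products). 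Your direct approach succeeds only in the orthogonal special case.
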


A precise definition of the differential jet space $\jet^m_{\Delta}(X)_a$ is recalled in~$\S$\ref{prelims}, where we also prove something new that we need about differential jet spaces of fibred products (see Proposition~\ref{prod} below).

The theorem itself is proved in~$\S$\ref{proofsect}.

It may be worth translating this theorem into purely differential-algebraic terms.
First of all, note that for the generic type of an irreducible $\Delta$-variety $X$ to be almost internal to the constant field is equivalent to the following geometric property: there exists an irreducible algebraic variety $V$ over $\C$ and an irreducible $\Delta$-subvariety $\Gamma\subset X\times V(\C)$ that projects generically finite-to-one and $\Delta$-dominantly onto both $X$ and $V(\C)$.
In this case we will say that $X$ is {\em $\C$-algebraic}.
Theorem~\ref{jets-moishezon} can be restated as:

\begin{corollary}
\label{jets-moishezon-geometric}
Suppose $X$ is  a finite dimensional irreducible $\Delta$-variety and consider the $\Delta$-jet bundle $\jet^m_{\Delta}(X)\to X$, for any $m>0$.
Suppose $Y$ is a $\Delta$-variety and $Z\subset X\times Y$ a $\Delta$-subvariety  such that
\begin{itemize}
\item
$Z$ projects $\Delta$-dominantly onto both $X$ and $Y$, and
\item
for $c\in Y$ generic, $Z_c\subset X$ is irreducible and $\C$-algebraic.
\end{itemize}
Then, for generic $c\in Y$, the restriction of the $\Delta$-jet bundle to $Z_c$, $\jet^m_{\Delta}(X)|_{Z_c}$, is $\C$-algebraic.
\end{corollary}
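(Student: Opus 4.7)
The plan is to reduce the statement directly to Theorem~\ref{jets-moishezon} via the dictionary, recalled in the paragraph preceding the corollary, between a $\Delta$-variety being $\C$-algebraic and its generic type being almost internal to the constants. I would begin by fixing a $\Delta$-field $k$ of definition for $X$, $Y$, and $Z$, choosing a generic point $(a,c)$ of $Z$ over $k$, and then picking $v \in \jet^m_\Delta(X)_a$ generic over $k\langle a,c\rangle$. Because the two projections from $Z$ are $\Delta$-dominant, $a$ is generic in $X$ over $k$, $c$ is generic in $Y$ over $k$, and $a$ is a generic point of $Z_c$ over $k\langle c\rangle$. In particular $v$ is generic over $k\langle a\rangle$, so the setup of Theorem~\ref{jets-moishezon} is in place.

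Set $L := k\langle c\rangle^{\alg}$. The hypothesis that $Z_c$ is $\C$-algebraic translates, via the above dictionary, into the statement that $\tp(a/L)$ is almost internal to the constants. Theorem~\ref{jets-moishezon} then yields that $\tp(v/L)$ is almost internal to the constants. To transfer this to the pair $(a,v)$, I would invoke the standard facts that almost internality is preserved under enlarging parameters and under forming tuples: extending from $L$ to $La$ preserves almost internality of $\tp(v/\cdot)$, and combining with $\tp(a/L)$ gives that $\tp(a,v/L)$ is almost internal to the constants. Translating back geometrically, $(a,v)$ being a generic point of $\jet^m_\Delta(X)|_{Z_c}$ over $L$ whose type is almost internal to the constants is exactly the assertion that $\jet^m_\Delta(X)|_{Z_c}$ is $\C$-algebraic.

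There is really no technical obstacle here, since the corollary is essentially just a geometric restatement of Theorem~\ref{jets-moishezon}. The main thing to watch is the choice of genericity conditions on $v$: picking $v$ generic over the larger field $k\langle a,c\rangle$ is what makes $(a,v)$ a generic point of $\jet^m_\Delta(X)|_{Z_c}$ over $L$, while its automatic genericity over $k\langle a\rangle$ is all that is needed to invoke Theorem~\ref{jets-moishezon}.
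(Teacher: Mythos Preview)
Your proposal is correct and matches the paper's intent: the paper presents the corollary as a direct geometric restatement of Theorem~\ref{jets-moishezon} via the dictionary between $\C$-algebraicity and almost $\C$-internality of the generic type, without giving a separate proof. Your unpacking of that translation---choosing $(a,c)$ generic in $Z$, taking $L=k\langle c\rangle^{\alg}$, applying Theorem~\ref{jets-moishezon}, and then combining $\tp(a/L)$ and $\tp(v/L)$ to conclude that the generic type of $\jet^m_\Delta(X)|_{Z_c}$ over $L$ is almost $\C$-internal---is exactly what is intended.
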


One could ask for a more robust geometric statement; one could ask that for any $\C$-algebraic $\Delta$-subvariety $Z\subset X$, $\jet^m_{\Delta}(X)|_Z$ is again $\C$-algebraic.
Indeed, the analogous statement for complex-analytic jet spaces is true.
In~$\S$\ref{examplesect}, however, we will give a counterexample showing that this expectation does not hold (even in the case of $m=1$, so for $\Delta$-tangent bundles).
So while we view this work as furthering the analogy that model theory provides between differential-algebraic geometry and complex-analytic goemetry, that analogy is not perfect.



\medskip

Throughout this paper all our fields are of characteristic zero.

\bigskip
\section{Preliminaries on differential jet spaces}
\label{prelims}

\noindent
In this section we review the theory of differential jet spaces introduced by Pillay and Ziegler in~\cite{pillayziegler03}, and then prove something about how they interact with fibred products (Proposition~\ref{prod}).

We assume some familiarity with the theory of (partial) differentially closed fields, $\operatorname{DCF}_{0,\ell}$, as well as the associated differential-algebraic geometry, see for example~\cite{marker96}.
If it is more convenient, the reader is welcome to assume that $\ell=1$ as there is little difference between this and the general case when one is studying finite dimensional definable sets.
We work in a sufficiently saturated model $(K,\Delta)\models\operatorname{DCF}_{0,\ell}$ with field of total constants $\C$.
While everything can be made sense of in a more abstract setting, we will work in the strictly affine setting and identify geometric objects with their $K$-points.
So, for us a {\em $\Delta$-variety} is simply a Kolchin closed subset $X\subseteq K^n$.
To say that $X$ is {\em finite dimensional} is to say that if $k$ is a differential field over which $X$ is defined and $a\in X$, then the differential field $k\langle a\rangle _\Delta$ is of finite transcendence degree over $k$.
It is a fact that every finite dimensional $\Delta$-variety is $\Delta$-birationally equivalent to an ``algebraic $D$-variety" in the sense of Buium~\cite{buium-book}; that is, of the form
$$(V,s)^\sharp:=\{a\in V:(a,\partial_1a,\dots,\partial_\ell a)=s(a)\}$$
where $V$ is an algebraic variety and $s:V\to\tau V$ is an algebraic section to the prolongation of $V$.
(See, for example, 3.7(ii) of~\cite{pillayziegler03} for the ordinary case and~3.10 of~\cite{leonsanchez12} for the partial case.)

Pillay and Ziegler~\cite{pillayziegler03} introduced differential jet spaces for differential varieties of the form $X=(V,s)^\sharp$.
The {\em $m$th $\Delta$-jet space of $X$ at $a$} is a certain finite dimensional $\C$-vector subspace of the algebraic $m$th jet spaces of $V$ at $a$.
Let us first recall the algebraic notion: the algebraic $m$th jet space of the algebraic variety $V$ at a point $a\in V$ is by definition
$$\jet^m(V)_a=\hom_K(\mathcal M_{V,a}/\mathcal M_{V,a}^{m+1},K).$$
To give an explicit co-ordinate description of $\jet^m(V)_{a}$ as a definable $K$-vector space, fix an affine embedding $V\subseteq\mathbb A^{n}$ with co-ordinates $x=(x_{1},\dots,x_{n})$.
We can identify $\jet^m(\mathbb A^{n})_{a}=K^{\Lambda}$ where
$\Lambda:=\{\alpha\in\mathbb N^{n}:0<\sum_j\alpha_j\leq m\}$.
Then, using $z=(z_{\alpha})_{\alpha\in\Lambda}$ as co-ordinates for $K^{\Lambda}$, we have that $\jet^m(V)_{a}$ is the $K$-linear subspace of $K^{\Lambda}$ defined by the equations
\begin{eqnarray*}
\sum_{\alpha\in\Lambda}\frac{\partial^{\alpha}P}{\alpha ! \partial x^\alpha}(a)z_{\alpha}&=&0
\end{eqnarray*}
as $P$ ranges through a generating set for the ideal of $V$.
For details on this co-ordinate description for algebraic jet spaces see, for example, $\S2$ of~\cite{pillayziegler03} or~$\S5.1$ of~\cite{paperA}.

As is explained in~\cite{pillayziegler03}, if $a\in X=(V,s)^\sharp$ then $s$ induces a $\Delta$-module\footnote{Recall that a $\Delta$-module is a $K$-vector space $M$ equipped with additive endomorphisms $d=(d_1,\dots,d_\ell)$ satisfying $d_i(ra)=\partial_i(r)a+rd_i(a)$, for all $r\in K$ and $a\in M$.}
structure on $\mathcal M_{V,a}/\mathcal M_{V,a}^{m+1}$, say $d=(d_1,\dots, d_\ell)$, which in turn gives a $\Delta$-module structure to the dual space $\jet^m(V)_a$.
We denote this by $D=(D_1,\dots,D_\ell)$.
So for $\mu\in\mathcal M_{V,a}/\mathcal M_{V,a}^{m+1}$ and $v\in\jet^m(V)_a$, we have $(D_i v)(\mu):=\partial_i\big(v(\mu)\big)-v(d_i\mu)$.
The {\em $m$th $\Delta$-jet space of $X$ at $a$} is then defined to be the subspace
$$\jet^m_\Delta(X)_a:=\{v\in\jet^m(V)_a: Dv=0\}.$$
The construction is uniform in $a$, in the sense that if $\jet^m V\to V$ is the morphism of algebraic varieties whose fibres are the algebraic jet spaces, then we have a $\Delta$-subvariety $\jet^m_\Delta X\subseteq\jet^mV$ that maps onto $X$ and whose fibres are the $\Delta$-jet spaces.

The above construction was generalised to arbitrary (possibly infinite dimensional) differential subvarieties of algebraic varieties by the third author and Scanlon in~\cite{paperB}, where also various other theories of fields with operators were treated uniformly (the difference case was already developed by Pillay and Ziegler).
We do not give the general definition here, and only rely on~\cite{paperB} as a crutch to talk about $\jet^m_\Delta X$ even when $X$ is not given explicitly as the sharp points of an algebraic $D$-variety.
Of course, as we are only interested here in the finite dimensional case, we could always give such a presentation of $X$ after $\Delta$-birational change -- but it is convenient for us not to always insist on this.

Here are some basic facts about $\Delta$-jet spaces that can be easily deduced from the construction and can be found in~\cite{pillayziegler03} (though sometimes only implicitly).

\begin {fact}
\label{jetfacts}
Fix $m>0$.
\begin{itemize}
\item[(a)]
If $X$ is finite dimensional and $a\in X$ then $\jet^m_\Delta(X)_a$ is a finite dimensional $\C$-vector space.
\item[(b)]
Given a morphism of $\Delta$-varieties, $f:X\to Y$, there is a canonical morphism $\jet^m_\Delta f:\jet^m_\Delta X\to\jet^m_\Delta X$ such that the following commutes:
$$\xymatrix{
\jet^m_\Delta X\ar[d]\ar[rr]^{\jet^m_\Delta f}&&\jet^m_\Delta X\ar[d]\\
X\ar[rr]^{f} &&Y
}$$
and making $\jet^m_\Delta$ a covariant functor on the category of $\Delta$-varieties.
\item[(c)]
If $f:X\to Y$ is $\Delta$-dominant and $a\in X$ is generic, then $\jet^m_\Delta(f)_a:\jet^m_\Delta(X)_a\to\jet^m_\Delta(Y)_{f(a)}$ is a surjective $\C$-linear map.
If moreover, $f$ is generically finite-to-one, then $\jet^m_\Delta(f)_a:\jet^m_\Delta(X)_a\to\jet^m_\Delta(Y)_{f(a)}$ is an isomorphism.
\item[(d)]
If $X=V(\C)$ where $V$ is an algebraic variety over $\C$, then $\jet^m_\Delta X$ is the set of $\C$-points of the algebraic jet space $\jet^mV$.
\end{itemize}
\end{fact}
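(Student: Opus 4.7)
The plan is to deduce each clause directly from the Pillay--Ziegler co-ordinate description recalled above. I would first reduce to the case $X=(V,s)^\sharp$ (and, for (b)--(c), $Y=(W,t)^\sharp$); this is harmless since each clause is $\Delta$-birationally invariant and, as noted in the text, every finite dimensional $\Delta$-variety admits such a presentation. Under this presentation, $\jet^m_\Delta(X)_a=\ker(D)$ sits inside the finite dimensional $K$-vector space $\jet^m(V)_a$ equipped with the explicit $\Delta$-module structure $D$.

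Clause (a) then follows from the standard fact that the flat sections of a $\Delta$-module $M$ over a differentially closed field form a $\C$-vector subspace of $\C$-dimension at most $\dim_K M$. For (b), algebraic jet spaces are functorial on pointed algebraic varieties via $v\mapsto v\circ f^*$, where $f^*:\mathcal M_{W,f(a)}/\mathcal M_{W,f(a)}^{m+1}\to \mathcal M_{V,a}/\mathcal M_{V,a}^{m+1}$ is the cotangent pullback; I would check directly from the definitions of $d$ and $D$ that when $f$ lifts to a morphism of algebraic $D$-varieties $(V,s)\to(W,t)$, the pullback $f^*$ commutes with $d$, and hence its dual commutes with $D$ and restricts to a morphism $\jet^m_\Delta X\to \jet^m_\Delta Y$. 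Functoriality is then inherited from the algebraic side.

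For (c), $\Delta$-dominance of $f$ implies that the underlying algebraic map is generically smooth, so at a generic $a$ the cotangent pullback is injective and its dual $\jet^m(V)_a\to\jet^m(W)_{f(a)}$ is a $K$-linear surjection; by (b) it is also $\Delta$-module compatible. The key input --- and the main obstacle of the whole fact --- is that a surjection of $\Delta$-modules over a differentially closed field remains surjective on flat sections: given $\xi$ in the target with a lift $v$ in the source, $Dv$ lies in the kernel submodule, and one must solve $Dw=Dv$ there, which is possible because the resulting inhomogeneous linear $\Delta$-system is consistent and $K$ is differentially closed. If moreover $f$ is generically finite-to-one, then generic \'etaleness makes $f^*$ a cotangent isomorphism, so already at the algebraic level $\jet^m(V)_a\to\jet^m(W)_{f(a)}$ is an isomorphism of $\Delta$-modules, restricting to an isomorphism on flat sections.

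Finally for (d), when $X=V(\C)$ I present $X$ as $(V,s_0)^\sharp$ where $s_0:V\to\tau V$ is the zero section available because $V$ is defined over $\C$. The induced derivations $d_i$ on $\mathcal M_{V,a}/\mathcal M_{V,a}^{m+1}$ then act componentwise by $\partial_i$ on co-ordinate representatives, so the flat sections of the dual $\jet^m(V)_a$ are exactly the $K$-linear functionals with $\C$-valued co-ordinates, i.e.\ the $\C$-points of $\jet^mV$. Everything other than the surjectivity step in (c) is either bookkeeping with the explicit co-ordinate description or direct functoriality; only that step uses the differential closure of $K$ in an essential way.
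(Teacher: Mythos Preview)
The paper does not actually prove this Fact; it is stated without proof and attributed to Pillay--Ziegler, with the remark that the clauses ``can be easily deduced from the construction and can be found in~\cite{pillayziegler03} (though sometimes only implicitly).'' Your sketch is precisely such a deduction from the co-ordinate description, and each step is sound.

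One small comment on~(c), which you rightly identify as the only place with genuine content. Your inhomogeneous-system argument works (the integrability condition $D_iD_jv=D_jD_iv$ is automatic, and after passing to a flat $\C$-basis of the kernel submodule the system becomes $\partial_i c_j=f_{ij}$, solvable in $\operatorname{DCF}_{0,\ell}$), but there is a slightly cleaner route that avoids it: by Pillay--Ziegler~3.1, which the paper itself invokes in the proof of Claim~\ref{map}, the $\C$-dimension of the flat sections of any finite $K$-dimensional $\Delta$-module equals its $K$-dimension. Applying this to the short exact sequence $0\to N\to \jet^m(V)_a\to \jet^m(W)_{f(a)}\to 0$ of $\Delta$-modules (where $N=\ker\pi$), left-exactness of taking flat sections plus equality of dimensions forces surjectivity of $\jet^m_\Delta(X)_a\to\jet^m_\Delta(Y)_{f(a)}$ immediately. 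This is really the same idea packaged differently, but it makes the dependence on differential closure transparent via a single citation rather than an explicit solvability argument.
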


The following is a property of $\Delta$-jet spaces that does not seem to be covered in the literature.

\begin{proposition}
\label{prod}
Suppose that for $i=1,2$ we have $a_i\in X_i=(V_i,s_i)^\sharp$.
Then
$$\jet^m_\Delta(X_1\times X_2)_{(a_1,a_2)}\subseteq\dcl\big(\jet^m_\Delta(X_1)_{a_1}, \jet^m_\Delta(X_2)_{a_2},\C\big).$$
\end{proposition}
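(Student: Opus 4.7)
The plan is to show that $\jet^m_\Delta(X_1 \times X_2)_{(a_1, a_2)}$ is $\C$-linearly spanned by two kinds of elements, each of which manifestly lies in $\dcl(\jet^m_\Delta(X_1)_{a_1}, \jet^m_\Delta(X_2)_{a_2}, \C)$: first, the images of the pure $\Delta$-jet spaces $\jet^m_\Delta(X_s)_{a_s}$ under the natural embeddings induced by the projections $V_1 \times V_2 \to V_s$; and second, ``products'' $v_1 \cdot v_2$ where $v_s \in \jet^{i_s}_\Delta(X_s)_{a_s}$ and $i_1 + i_2 \leq m$.

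To construct the product, I will use the identification $\mathcal{O}_{V_1 \times V_2, (a_1, a_2)} = \mathcal{O}_{V_1, a_1} \otimes_K \mathcal{O}_{V_2, a_2}$, under which $\mathcal{M}^k = \sum_{a+b=k} \mathcal{M}_1^a \otimes \mathcal{M}_2^b$. Given $v_s \in \jet^{i_s}(V_s)_{a_s}$, extend each to $\mathcal{O}_s/\mathcal{M}_s^{i_s+1}$ by $v_s(1) := 0$, and define $v_1 \cdot v_2$ as $v_1 \otimes v_2$ acting on $\mathcal{O}_1 \otimes \mathcal{O}_2 / \mathcal{M}^{i_1+i_2+1}$. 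This is well-defined because each summand $\mathcal{M}_1^a \otimes \mathcal{M}_2^b$ with $a+b = i_1+i_2+1$ has either $a > i_1$ or $b > i_2$, and one factor of the product vanishes; hence $v_1 \cdot v_2 \in \jet^{i_1+i_2}(V_1 \times V_2)_{(a_1, a_2)}$, which embeds naturally into $\jet^m(V_1 \times V_2)_{(a_1, a_2)}$. The key verification is that this construction is $\Delta$-equivariant, i.e., that $v_1 \cdot v_2$ is a $\Delta$-jet when $v_1$ and $v_2$ are. Since the $\Delta$-structure on $\mathcal{M}/\mathcal{M}^{i_1+i_2+1}$ is induced by $s_1 \times s_2$, Leibniz gives $d_t(X^\alpha Y^\beta) = (d_t X^\alpha)Y^\beta + X^\alpha (d_t Y^\beta)$, with $d_t X^\alpha$ purely in the $X$-variables and $d_t Y^\beta$ purely in the $Y$-variables. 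Because $v_s$ kills monomials of degree exceeding $i_s$, and the truncation at total degree $i_1+i_2$ discards all cross-term contributions that would spoil equivariance, the equations $D_t v_1 = 0$ and $D_t v_2 = 0$ combine cleanly to give $D_t(v_1 \cdot v_2) = 0$.

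Finally, to see that these constructions span $\jet^m_\Delta(X_1 \times X_2)_{(a_1, a_2)}$, I will check in local coordinates that already at the algebraic (non-$\Delta$) level the combined $K$-linear $\Delta$-equivariant map
\[
\bigoplus_{\substack{i_1, i_2 \geq 1 \\ i_1+i_2 \leq m}} \jet^{i_1}(V_1)_{a_1} \otimes_K \jet^{i_2}(V_2)_{a_2} \;\oplus\; \jet^m(V_1)_{a_1} \;\oplus\; \jet^m(V_2)_{a_2} \;\twoheadrightarrow\; \jet^m(V_1 \times V_2)_{(a_1, a_2)}
\]
is surjective: each basis functional dual to a monomial $X^\alpha Y^\beta$ with $\alpha, \beta \neq 0$ is realised as a single product $e_\alpha \cdot e_\beta$, while pure monomials come from the pure embeddings. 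A standard fact about $\Delta$-modules over a differentially closed field is that the constants functor preserves $K$-dimensions and hence transforms surjections into surjections; together with the fact that $\Delta$-constants commute with $\otimes_K$, passing to $\ker D$ yields the required spanning of $\jet^m_\Delta(X_1 \times X_2)_{(a_1, a_2)}$ by elements of $\jet^{i_1}_\Delta(X_1)_{a_1} \otimes_\C \jet^{i_2}_\Delta(X_2)_{a_2}$ and pure $\Delta$-jets, each of which is definable from its constituents together with elements of $\C$. The main obstacle is the middle paragraph: the $\Delta$-equivariance verification requires careful degree bookkeeping to see that the degree-$(i_1+i_2)$ truncation precisely decouples the product rule into its two tensor factors.
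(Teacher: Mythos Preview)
Your proposal is correct and rests on the same two pillars as the paper's proof: (i) the Leibniz--rule computation showing that the ``product'' of two $\Delta$-horizontal functionals is again $\Delta$-horizontal, and (ii) the fact that for finite-dimensional $\Delta$-modules over a differentially closed field the constants functor preserves $K$-dimension (hence surjections) and commutes with tensor products. Where you differ is in packaging. You build a spanning set by ranging over all bi-degrees $(i_1,i_2)$ with $i_1+i_2\leq m$ and treating the ``pure'' summands $\jet^m(V_s)_{a_s}$ separately; this forces the extra bookkeeping of embedding each $\jet^{i_1+i_2}$ into $\jet^m$ and checking compatibility of the truncations. The paper avoids all of this with one trick: it replaces $\mathcal M_{V_i,a_i}/\mathcal M_{V_i,a_i}^{m+1}$ by $M_i:=\mathcal O_{V_i,a_i}/\mathcal M_{V_i,a_i}^{m+1}=K\oplus\mathcal M_{V_i,a_i}/\mathcal M_{V_i,a_i}^{m+1}$, so that $M_i^\Delta=\C\oplus\jet^m_\Delta(X_i)_{a_i}$. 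Then a single embedding $\mathcal M_{V_1\times V_2,a}/\mathcal M_{V_1\times V_2,a}^{m+1}\hookrightarrow M_1\otimes_K M_2$ (sending $\overline{(z-a)}^\alpha$ to $\overline{(x-a_1)}^{\alpha_1}\otimes\overline{(y-a_2)}^{\alpha_2}$) dualizes to place $\jet^m_\Delta(X_1\times X_2)_a$ inside $(M_1\otimes_K M_2)^\Delta\cong M_1^\Delta\otimes_\C M_2^\Delta$, and the pure and mixed terms fall out uniformly from the $\C$-summand. Your decomposition over bi-degrees is effectively a manual unpacking of this one tensor product. One minor remark: what you flag as the ``main obstacle'' (the degree bookkeeping in the $\Delta$-equivariance check) is in fact routine once the quotients are set up, since $d(\mathcal M^k)\subseteq\mathcal M^k$ makes every truncation a $\Delta$-module map; the genuine content is the Leibniz identity $D(v_1\cdot v_2)=(Dv_1)\cdot v_2+v_1\cdot(Dv_2)$, which you state correctly.
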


\begin{remark}
When $m=1$ the differential $m$th jet spaces is nothing other than Kolchin's differential tangent space from~\cite{kolchin85}.
In that case, Proposition~\ref{prod} is much easier to see as differential tangent spaces commute with products.
Except for the proof of this proposition (which becomes unnecessary), the proof of Theorem~\ref{jets-moishezon} does not change if we restrict our attention to the case of $m=1$ only, and the reader is therefore invited to do so if he or she prefers.
\end{remark}

\begin{proof}
We begin with some preliminary observations about $\Delta$-modules.

First, recall from~\cite{pillayziegler03} that if $(M,d)$ is a  $\Delta$-module then we obtain a dual $\Delta$-module, $\big(\hom_K(M,K),D\big)$, by defining $D_iv:M\to K$ by $a\mapsto\partial_i(v(a))-v(d_ia)$.
Let us, somewhat unusually, set
$$M^\Delta:=\{v\in\hom_K(M,K):Dv=0\}$$
Then~3.1 of~\cite{pillayziegler03} tells us that $M^\Delta$ is a $\C$-vector space of dimension $\dim_KM$, when the latter is finite.

We can also take tensor products of $\Delta$-modules.
If $(M,d_M)$ and $(N,d_N)$  are  $\Delta$-modules then it is not hard to verify that we get a $\Delta$-module $(M\otimes_KN,d)$ by $d(a\otimes b)=d_Ma\otimes b+a\otimes d_Nb$.

Fix finite $K$-dimensional $\Delta$-modules $(M,d_M)$ and $(N,d_N)$ and consider the $K$-bilinear map
$$\phi:\hom_K(M,K)\times\hom_K(N,K)\to \hom_K(M\otimes_KN,K)$$
given by $\phi(v,w)(a\otimes b)=v(a)w(b)$.
Note that this is the natural map in linear algebra that induces an isomorphism between the tensor product of duals with the dual of a tensor product.
\begin{claim}
\label{map}
$\phi$ restricted to $M^\Delta\times N^\Delta$ induces an isomorphism of $\C$-vector spaces, $M^\Delta\otimes_\C N^\Delta\approx (M\otimes_KN)^\Delta$.
In particular, $\span_{\C}\big(\phi(M^\Delta\times N^\Delta)\big)=(M\otimes_KN)^\Delta$.

\end{claim}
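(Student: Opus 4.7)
The plan is to establish the claim as a short piece of finite-dimensional linear algebra, leveraging the cited fact~3.1 of~\cite{pillayziegler03} that $\dim_\C P^\Delta = \dim_K P$ for any finite-dimensional $\Delta$-module $P$. First, I would check that $\phi(M^\Delta\times N^\Delta)\subseteq (M\otimes_K N)^\Delta$. Unwinding the dual action against the Leibniz rule for the tensor $\Delta$-module structure on $M\otimes_K N$ gives
\[
(D_i\phi(v,w))(a\otimes b) \;=\; (D_iv)(a)\,w(b) \;+\; v(a)\,(D_iw)(b),
\]
which vanishes on simple tensors, hence on all of $M\otimes_K N$, whenever $D_iv=D_iw=0$. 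Since $\phi$ is $K$-bilinear and $\C\subseteq K$, its restriction to $M^\Delta\times N^\Delta$ is $\C$-bilinear and factors through a $\C$-linear map $\tilde\phi: M^\Delta\otimes_\C N^\Delta \to (M\otimes_K N)^\Delta$.

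Next I would compare dimensions and then show injectivity. Applying the cited fact three times,
\[
\dim_\C(M^\Delta\otimes_\C N^\Delta) \;=\; \dim_K M\cdot \dim_K N \;=\; \dim_K(M\otimes_K N) \;=\; \dim_\C(M\otimes_K N)^\Delta,
\]
so it suffices to show $\tilde\phi$ is injective. The key observation is that a $\C$-basis $v_1,\dots,v_s$ of $M^\Delta$ is automatically a $K$-basis of $\hom_K(M,K)$: $K$-linear independence is the standard Wronskian-style argument (take a shortest $K$-linear relation among the $v_i$, scale so that one coefficient is $1$, apply $D_j$ to it, and conclude all coefficients lie in $\C$, contradicting $\C$-independence), and the cardinality matches by the dimension equality above. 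The same holds for $N^\Delta$. Hence $\{v_i\otimes w_j\}$ is a $K$-basis of $\hom_K(M,K)\otimes_K\hom_K(N,K)$, and since the canonical map to $\hom_K(M\otimes_K N,K)$ is the familiar $K$-linear isomorphism for finite-dimensional duals, the images $\{\phi(v_i,w_j)\}$ are $K$-linearly independent in $\hom_K(M\otimes_K N,K)$, and a fortiori $\C$-linearly independent. This gives injectivity of $\tilde\phi$.

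Combining, $\tilde\phi$ is the desired $\C$-linear isomorphism, and the ``in particular'' assertion about $\C$-spans then follows from its surjectivity. The only mildly subtle ingredient, and the only place the $\Delta$-structure genuinely enters beyond the formal computation of the first step, is the passage between $\C$-bases of $P^\Delta$ and $K$-bases of $\hom_K(P,K)$; but this is standard differential-Galois-style linear algebra, so I do not anticipate any real obstacle.
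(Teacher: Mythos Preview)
Your proof is correct and follows essentially the same route as the paper's: verify $\phi(M^\Delta\times N^\Delta)\subseteq (M\otimes_K N)^\Delta$ by the Leibniz computation, induce $\tilde\phi$, match dimensions via~3.1 of~\cite{pillayziegler03}, and deduce injectivity from the $K$-linear isomorphism $\hom_K(M,K)\otimes_K\hom_K(N,K)\cong\hom_K(M\otimes_K N,K)$. The only difference is that where the paper writes a single sentence (``injectivity follows exactly as it does for the injectivity of the $K$-linear map induced by $\phi$''), you spell out the bridge by showing a $\C$-basis of $M^\Delta$ is a $K$-basis of $\hom_K(M,K)$ via the standard Wronskian/shortest-relation argument; this is a helpful elaboration rather than a genuinely different approach.
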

\begin{proof}[Proof of Claim~\ref{map}]
First, if $v\in M^\Delta$ and $w\in N^\Delta$, then the following computation shows that $\phi(v,w)\in(M\otimes_KN)^\Delta$,
\begin{eqnarray*}
D\big(\phi(v,w)\big)(a\otimes b)
&=&
\partial\big(\phi(v,w)(a\otimes b)\big)-\phi(v,w)(d(a\otimes b))\\
&=&
\partial\big(v(a)w(b)\big)-\phi(v,w)\big(d_Ma\otimes b+a\otimes d_Nb\big)\\
&=&
(\partial v(a))w(b)+v(a)(\partial w(b))-v(d_Ma)w(b)-v(a)w(d_Nb)\\
&=&
\big(\partial v(a)-v(d_Ma)\big)w(b)+v(a)\big(\partial w(b)-w(d_Nb)\big)\\
&=&
(D_Mv(a))w(b)+ v(a)(D_Nw(b))\\
&=&
0w(b)+ v(a)0\\
&=&0
\end{eqnarray*}
Hence by $\C$-bilinearity we do get an induced $\C$-linear map
$$M^\Delta\otimes_\C N^\Delta\to(M\otimes_KN)^\Delta$$
Injectivity follows exactly as it does for the injectivity of the $K$-linear map induced by $\phi$.
As both have the same dimension, this induced map is an isomorphism.
\end{proof}

We are going to apply this claim to the $\Delta$-modules $M=M_1:=\mathcal O_{V_1,a_1}/\mathcal M_{V_1,a_1}^{m+1}$ and $N=M_2:=\mathcal O_{V_2,a_2}/\mathcal M_{V_2,a_2}^{m+1}$.
Note that $M_i=K\oplus\mathcal M_{V_i,a_i}/\mathcal M_{V_i,a_i}^{m+1}$ so that canonically
$$\hom_K(M_i,K)=K\oplus \jet^m(V_i)_{a_i}$$
where the direct sum is also in the sense of $\Delta$-modules if we put on $K$ the $\Delta$-module structure given by $\partial$.
Hence, taking constants, we get
\begin{equation}
\label{midelta}
M_i^{\Delta}=\C\oplus \jet_\Delta^m(X_i)_{a_i}
\end{equation}
as $\C$-vector spaces.

To represent things in co-ordinates, let us fix affine embeddings $V_i\subseteq\mathbb A^{n_i}$, set  $n:=n_1+n_2$, $z:=(x_1,\dots,x_{n_1},y_1,\dots,y_{n_2})$ co-ordinates for $\mathbb A^n$, and
$$a:=(a_1,a_2)\in X_1\times X_2\subset V_1\times V_2\subseteq\mathbb A^n$$
Consider the standard ``monomial" basis for $\mathcal M_{\mathbb A^n,a}/\mathcal M_{\mathbb A^n,a}^{m+1}$, 
$\{(z-a)^\alpha:\alpha\in\Lambda\}$,
where $\Lambda:=\{\alpha\in\mathbb N^n:0<\sum_j\alpha_j\leq m\}$.
Denote by $\overline{(z-a)}^\alpha$ their images in $\mathcal M_{V_1\times V_2,a}/\mathcal M_{V_1\times V_2,a}^{m+1}$.
There is a natural embedding
\begin{eqnarray*}
\mathcal M_{V_1\times V_2,a}/\mathcal M_{V_1\times V_2,a}^{m+1} &\subset& M_1\otimes_KM_2
\end{eqnarray*}
induced by writing $\overline{(z-a)}^\alpha=\overline{(x-a_1)}^{\alpha_1} \overline{(y-a_2)}^{\alpha_2}$.
Note that $\alpha_i$ may be zero for one of $i=1,2$, and this is why we work with $M_i=\mathcal O_{V_i,a_i}/\mathcal M_{V_i,a_i}^{m+1}$ rather than with $\mathcal M_{V_i,a_i}/\mathcal M_{V_i,a_i}^{m+1}$.
Now, we obtain a corresponding embedding of the dual spaces
\begin{eqnarray*}
\jet^m(V_1\times V_2)_a &\subset &\hom_K(M_1\otimes_KM_2,K)
\end{eqnarray*}
by extending $K$-linear functionals on $\mathcal M_{V_1\times V_2,a}/\mathcal M_{V_1\times V_2,a}^{m+1}$ to $M_1\otimes_K M_2$ by setting them to be zero where they were not defined. 
The above emvedding is also as $\Delta$-modules.
So
\begin{eqnarray*}
\jet_\Delta^m(X_1\times X_2)_a
&\subset&
(M_1\otimes_KM_2)^\Delta
\end{eqnarray*}
Putting this together with Claim~\ref{map} as well as~(\ref{midelta}), implies
\begin{equation}
\label{jetproduct}
\jet_\Delta^m(X_1\times X_2)_a \  \subset \  \big(\C\oplus \jet_\Delta^m(X_1)_{a_1}\big)\otimes_{\C}\big(\C\oplus \jet_\Delta^m(X_2)_{a_2}\big)
\end{equation}
It remains to trace through the various identifications to verify in co-ordinates that~(\ref{jetproduct}) does in fact lead to a proof of Proposition~\ref{prod}.

Fix $v\in \jet_\Delta^m(X_1\times X_2)_a$.
As $\jet_\Delta^m(X_1\times X_2)_a$ is a $\C$-linear subspace of $K^{\Lambda}$ we can thus write $v=(v_\alpha)_{\alpha\in\Lambda}$.
Note that viewing $v$ as linear function on $\mathcal M_{V_1\times V_2,a}/\mathcal M_{V_1\times V_2,a}^{m+1}$ we can compute
$$v_\alpha=v\big(\overline{(z-a)}^\alpha\big)$$
Now fix $\C$-bases $W$ and $W'$ for $\jet_\Delta^m(X_1)_{a_1}$ and $\jet_\Delta^m(X_2)_{a_2}$ respectively.
By~(\ref{jetproduct}) we get
$$v=c_1(1\otimes 1)+\sum_{w\in W}c_w(w\otimes 1)+\sum_{w'\in W'}c_{w'}(1\otimes w')+\sum_{w\in W,w'\in W'}c_{w,w'}(w\otimes w')$$
where the $c$'s are constants.
Evaluating both sides at $\overline{(z-a)}^\alpha=\overline{(x-a_1)}^{\alpha_1} \overline{(y-a_2)}^{\alpha_2}$, we get
\begin{equation}
\label{coord}
v_\alpha=c_11_{\alpha_1}1_{\alpha_2}+\sum_{w\in W}c_ww_{\alpha_1}1_{\alpha_2}+\sum_{w'\in W'}c_{w'}1_{\alpha_1}w'_{\alpha_2}+\sum_{w\in W,w'\in W'}c_{w,w'}w_{\alpha_1}w'_{\alpha_2}
\end{equation}
where $1_\beta=
\begin{cases}
1& \text{ if } \beta=0\\
0& \text{else}
\end{cases}$.
Equation~(\ref{coord}) shows explicitly in co-ordinates that $v\in\dcl(W,W',\C)$.
Hence $\jet^m_\Delta(X_1\times X_2)_a\subseteq\dcl\big(\jet^m_\Delta(X_1)_{a_1}, \jet^m_\Delta(X_2)_{a_2},\C\big)$, as desired.
\end{proof}

\bigskip
\section{The Proof of Theorem~\ref{jets-moishezon}}
\label{proofsect}

\noindent
We continue to work in a sufficiently saturated models $(K,\Delta)\models\operatorname{DCF}_{0,\ell}$ with field of total constants $\C$.
We begin with some minor reductions and notational simplifications.
First, fix $m>0$ and abbreviate $\jet^m_\Delta$ by $T$.
This will also serve to remind the reader that not much is lost if one considers simply the $\Delta$-tangent spaces, that is the case when $m=1$.
What we will use about $T$, freely and more or less axiomatically, are the facts stated and/or proved in the previous section.
Second, by working over $k$, we can drop all reference to this base field altogether.
Third, it clearly suffices to prove the theorem when $L$ is the algebraic closure of a finitely generated differential field (over $k$).
Hence, what we actually need to prove is that if $a$ is of finite dimension and $\stp(a/b)$ is almost $\C$-internal, then so is $\stp(v/b)$ for $v$ a generic point in $T\big(\loc(a)\big)_a$.

Our next reduction is to the case that $b\in\acl(a)$.
In fact this too is for convenience, in the sense that it is not essential to the proof.
However, for nontrivial (but known) reasons, we can actually reduce to this case: the first author has shown (Lemma~2.3 of~\cite{chatzidakis12}) that as a consequence of the Canonical Base Property, if $\stp(a/b)$ and $\stp(a/b')$ are almost $\C$-internal then so is $\tp\big(a/\acl(b)\cap\acl(b')\big)$.
Taking $b'=a$, we get that $\tp\big(a/\acl(a)\cap\acl(b)\big)$is almost $\C$-internal.
If we show that $\tp\big(v/\acl(a)\cap\acl(b)\big)$ is almost $\C$-internal then we get {\em a fortiori} that $\stp(v/b)$ is too.
That the Canonical Base Property in the required form holds in $\operatorname{DCF}_0$ was done by Pillay and Ziegler in~\cite{pillayziegler03}, and their argument was shown to extend to finite dimensional types in $\operatorname{DCF}_{0,\ell}$ by Leon Sanchez~\cite{omarthesis}.

We have thus reduced to showing the following statement:
\begin{itemize}
\item[$(*)$]
Suppose $a$ is a tuple of finite dimension, $b\in\acl(a)$, and $\stp(a/b)$ is almost $\C$-internal.
If $v$ is a generic point of $T\big(\loc(a)\big)_a$, then $\stp(v/b)$ is almost $\C$-internal.
\end{itemize}
We will proceed via a series of lemmas.

\begin{lemma}
\label{anyvector}
The statement~$(*)$ is equivalent to the version where the conclusion is made about all $v\in T\big(\loc(a)\big)_a$ rather than just the generic $v$.
\end{lemma}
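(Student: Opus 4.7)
One direction is trivial: the all-$v$ version of the statement visibly implies the generic-$v$ version. So the content of the lemma is the converse, namely that $(*)$, assumed only for the generic $v$, forces the same conclusion for every $v \in T(\loc(a))_a$. The plan is to express an arbitrary $v$ as the difference of two generic vectors in the $\C$-vector space $T(\loc(a))_a$, and then combine the resulting two internality statements via standard closure properties of almost $\C$-internality.

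Concretely, I would choose $v_1 \in T(\loc(a))_a$ generic over $\acl(a,v)$ and set $v_2:=v_1-v$. Since translation by $v$ is an $\acl(a,v)$-definable bijection of the $\C$-vector space $T(\loc(a))_a$ with itself, it preserves the generic type of this vector space over $\acl(a,v)$; hence $v_2$ is also generic in $T(\loc(a))_a$ over $\acl(a,v)$, and in particular generic over $b\subseteq\acl(a)$. Applying the hypothesis $(*)$ to each of $v_1$ and $v_2$ separately then gives that both $\stp(v_1/b)$ and $\stp(v_2/b)$ are almost $\C$-internal.

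To finish I would invoke the closure properties of almost $\C$-internality recorded in Proposition~2.4 of~\cite{moishezon}: it is preserved under enlarging the base (so from $\stp(v_2/b)$ almost $\C$-internal one gets $\stp(v_2/b,v_1)$ almost $\C$-internal), it is preserved under stacking (so combining with $\stp(v_1/b)$ almost $\C$-internal yields $\stp(v_1,v_2/b)$ almost $\C$-internal), and it is preserved under $\dcl$. Since $v=v_1-v_2\in\dcl(v_1,v_2)$, we conclude that $\stp(v/b)$ is almost $\C$-internal, as required.

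There is no serious obstacle here; the only point that requires a moment's verification is the translation-invariance of the generic type of $T(\loc(a))_a$ over $\acl(a,v)$, and this is immediate because the translation map in question is a definable automorphism of the vector space over those parameters.
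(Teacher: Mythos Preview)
Your proof is correct and follows essentially the same route as the paper's: both write an arbitrary $v$ as a combination of two generics of the $a$-definable group $T(\loc(a))_a$ and then use the closure of almost $\C$-internality under $\dcl$ and concatenation. The paper phrases it as ``every element is a sum of generics'' and records $v\in\dcl(v_1v_2a)$, invoking the almost $\C$-internality of $\stp(a/b)$ alongside that of $\stp(v_i/b)$; your version subtracts instead and omits $a$ from the $\dcl$, which is harmless since the ambient addition is $\emptyset$-definable (and in any case $\stp(a/b)$ is almost $\C$-internal by hypothesis).
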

\begin{proof}
Indeed, this is because $T\big(\loc(a)\big)_a$ is an $a$-definable (additive) group and hence every element is a sum of generics.
So $v\in\dcl(v_1v_2a)$ for a pair of generic points $v_1$ and $v_2$, and the almost $\C$-internality of $\stp(v/b)$ follows from that of $\stp(v_1/b), \stp(v_2/b),$ and $\stp(a/b)$.
\end{proof}

In what follows, whenever we say that ``$(*)$ holds for $(a,b)$" we mean that both the hypotheses and the conslusions hold.
In particular, $a$ is a tuple of finite dimension and $b\in\acl(a)$.

\begin{lemma}
\label{image}
Suppose $(*)$ holds for $(a,b)$.
If $\acl(b)\subseteq\acl(a')\subseteq\acl(a)$ then $(*)$ holds for $(a', b)$ as well.
In particular, $(*)$ is preserved if one replaces $a$ by anything interalgebraic with it.
\end{lemma}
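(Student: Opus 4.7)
The plan is to exploit the joint locus $X := \loc(a,a')$ and its two coordinate projections, together with Fact~\ref{jetfacts}(c). The hypotheses of $(*)$ for $(a',b)$ are immediate: $a'$ is of finite dimension because $\acl(a')\subseteq\acl(a)$; $b\in\acl(a')$ by assumption; and $\stp(a'/b)$ is almost $\C$-internal since $a'\in\acl(a)$ and $\stp(a/b)$ is almost $\C$-internal. By Lemma~\ref{anyvector}, the conclusion I need to establish is that $\stp(v'/b)$ is almost $\C$-internal for \emph{every} $v'\in T(\loc(a'))_{a'}$, not just for generic $v'$.

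Let $\pi_1,\pi_2$ denote the projections from $X=\loc(a,a')$ onto $\loc(a)$ and $\loc(a')$ respectively. Both are $\Delta$-dominant, as $(a,a')$ is generic in $X$ and its images $a$ and $a'$ are generic in the target loci. Moreover, $\pi_1$ is generically finite-to-one since $a'\in\acl(a)$ forces the fibre over $a$ to be finite. Fact~\ref{jetfacts}(c) then tells me that $T(\pi_1)_{(a,a')}: T(X)_{(a,a')}\to T(\loc(a))_a$ is a $\C$-linear isomorphism, while $T(\pi_2)_{(a,a')}: T(X)_{(a,a')}\to T(\loc(a'))_{a'}$ is a $\C$-linear surjection.

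Given any $v'\in T(\loc(a'))_{a'}$, I would lift it through $T(\pi_2)_{(a,a')}$ to some $w\in T(X)_{(a,a')}$, and then set $v:=T(\pi_1)_{(a,a')}(w)\in T(\loc(a))_a$. Since $T(\pi_1)_{(a,a')}$ is an $\{a,a'\}$-definable isomorphism, $w\in\dcl(v,a,a')$, and therefore $v'=T(\pi_2)_{(a,a')}(w)\in\dcl(v,a,a')$. By $(*)$ for $(a,b)$ together with Lemma~\ref{anyvector} (used in the ``all $v$'' direction), $\stp(v/b)$ is almost $\C$-internal. Combined with the almost $\C$-internality of $\stp(a,a'/b)$---which follows from $(a,a')\in\acl(a)$ and the hypothesis on $\stp(a/b)$---transitivity yields almost $\C$-internality of $\stp(v,a,a'/b)$, and hence of $\stp(v'/b)$, since $v'\in\dcl(v,a,a')$.

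I do not anticipate any serious obstacle: the argument is essentially careful bookkeeping of definability through the two projections, combined with a clean application of Fact~\ref{jetfacts}(c). The one subtlety worth flagging is the two-way use of Lemma~\ref{anyvector}: first to upgrade $(*)$ for $(a,b)$ to almost $\C$-internality for \emph{every} $v\in T(\loc(a))_a$ (not merely the generic ones), and then to convert almost $\C$-internality of every $v'\in T(\loc(a'))_{a'}$ back into $(*)$ for $(a',b)$. The ``in particular'' clause, where $a$ and $a'$ are interalgebraic, is then simply the special case $\acl(a')=\acl(a)$.
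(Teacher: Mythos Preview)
Your argument is correct and follows essentially the same route as the paper: both use the joint locus $Z=\loc(a',a)$ and Fact~\ref{jetfacts}(c) to get an $(a,a')$-definable isomorphism $T(Z)_{(a',a)}\cong T(\loc(a))_a$ together with a surjection onto $T(\loc(a'))_{a'}$, and then push almost $\C$-internality through. The only cosmetic difference is that the paper works directly with generic points (a generic $v$ in $T(\loc(a))_a$ has generic image $v'$ in $T(\loc(a'))_{a'}$, so $(*)$ for $(a,b)$ immediately gives $(*)$ for $(a',b)$), whereas you invoke Lemma~\ref{anyvector} twice to pass through arbitrary points; either way is fine.
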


\begin{proof}
Note that the hypotheses of $(*)$ hold automatically for $(a,b)$.
Since $a'\in\acl(a)$ there exists a $\Delta$-subvariety
$$Z\subseteq \loc(a')\times\loc(a)$$
which projects $\Delta$-dominantly onto both co-ordinates and is generically finite-to-one onto $\loc(a)$.
It follows that $T(Z)_{a'a}$ is $a'a$-definably isomorphic to $T\big(\loc(a)\big)_a$ and admits a surjective $a'a$-definable  map onto $T\big(\loc(a')\big)_{a'}$.
Hence if $v$ is generic in $T\big(\loc(a)\big)_a$ then its image $v'$ in $T\big(\loc(a')\big)_{a'}$ is generic, and the almost $\C$-internality of $\stp(v/b)$ implies that $\stp(v'/b)$ is also almost $\C$-internal.
\end{proof}

\begin{lemma}
\label{fibreproduct}
Suppose $(*)$ holds for $(a_1,b)$ and for $(a_2,b)$, and $a_1\ind_ba_2$.
Then $(*)$ holds for $(a_1a_2,b)$.
\end{lemma}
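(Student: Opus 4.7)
The plan is to combine Proposition~\ref{prod} with the functoriality of $\Delta$-jet spaces and with Lemma~\ref{anyvector}. First I verify the hypotheses of $(*)$ for $(a_1a_2,b)$. The tuple $a_1a_2$ has finite dimension, and $b\in\acl(a_1)\subseteq\acl(a_1a_2)$. The almost $\C$-internality of $\stp(a_1a_2/b)$ is where the assumption $a_1\ind_ba_2$ gets used: taking witnesses $d_i$ to almost $\C$-internality of $\stp(a_i/b)$ with $a_i\in\acl(bd_i,\C)$ and $a_i\ind_bd_i$, the independence of $a_1$ and $a_2$ over $b$ allows one to adjust $d_i$ by a nonforking extension so that the combined tuple $d_1d_2$ is independent from $a_1a_2$ over $b$, thereby witnessing almost $\C$-internality of $\stp(a_1a_2/b)$.

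For the main content, set $X:=\loc(a_1a_2)$ and $X_i:=\loc(a_i)$, and continue to write $T$ for $\jet^m_\Delta$. The closed inclusion $X\subseteq X_1\times X_2$ induces, via Fact~\ref{jetfacts}(b), an $(a_1,a_2)$-definable embedding
$$T(X)_{(a_1,a_2)} \ \hookrightarrow \ T(X_1\times X_2)_{(a_1,a_2)},$$
injective because at the level of algebraic jets it is dual to the surjection of truncated local rings $\mathcal M_{X_1\times X_2,(a_1,a_2)}/\mathcal M_{X_1\times X_2,(a_1,a_2)}^{m+1}\to \mathcal M_{X,(a_1,a_2)}/\mathcal M_{X,(a_1,a_2)}^{m+1}$. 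Applying Proposition~\ref{prod} to $X_1\times X_2$ then yields
$$T(X)_{(a_1,a_2)} \ \subseteq \ \dcl\bigl(T(X_1)_{a_1},\,T(X_2)_{a_2},\,\C,\,a_1,a_2\bigr).$$

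To finish, Lemma~\ref{anyvector} upgrades $(*)$ for $(a_i,b)$ to the statement that $\stp(w/b)$ is almost $\C$-internal for \emph{every} $w\in T(X_i)_{a_i}$. Fix a $\C$-basis $W_i$ of $T(X_i)_{a_i}$; each element of $W_i$, and hence the finite tuple $W_i$ itself, has almost $\C$-internal strong type over $b$. Combining with the almost $\C$-internality of $\stp(a_1a_2/b)$ already established, the tuple $W_1W_2a_1a_2$ has almost $\C$-internal strong type over $b$. Any $v$ (generic or not) in $T(X)_{(a_1,a_2)}$ lies in $\dcl(W_1,W_2,a_1,a_2,\C)$, from which $\stp(v/b)$ inherits almost $\C$-internality, as desired.

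The main obstacle I anticipate is the initial step of combining almost $\C$-internality of the factors into that of the tuple $a_1a_2$; this is exactly where the independence hypothesis $a_1\ind_ba_2$ enters, and one must be careful to arrange witnesses that remain jointly independent from $a_1a_2$ over $b$. The embedding of $\Delta$-jet spaces induced by a closed inclusion, though a standard consequence of functoriality, also deserves to be made explicit since Proposition~\ref{prod} is formulated only for the product $X_1\times X_2$ and not for arbitrary subvarieties thereof.
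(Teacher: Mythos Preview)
Your argument is correct and follows essentially the same route as the paper: embed $T(X)$ into $T$ of a product, invoke Proposition~\ref{prod}, and finish with Lemma~\ref{anyvector}. The paper differs only cosmetically by first passing to $(ba_1a_2,b)$ via Lemma~\ref{image} and writing $\loc(ba_1a_2)=Y_1\times_BY_2$.

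Two small points are worth noting. First, Proposition~\ref{prod} is stated only for $X_i$ of the form $(V_i,s_i)^\sharp$; the paper explicitly carries out the $\Delta$-birational reduction to this case, which you skip. Since $a_i$ is generic in $X_i$, the replacement $X_i\dashrightarrow\widehat X_i$ is defined at $a_i$ and induces an isomorphism on $T$ there (Fact~\ref{jetfacts}(c)), so your argument survives, but this step should be made explicit. Second, your diagnosis of where the hypothesis $a_1\ind_ba_2$ enters is off: almost $\C$-internality of $\stp(a_1a_2/b)$ follows from that of each $\stp(a_i/b)$ with no independence assumption (almost $\C$-internality is closed under concatenation in stable theories). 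In the paper's proof the independence is used to identify $\loc(ba_1a_2)$ with the fibred product $Y_1\times_BY_2$, which is what makes their birational reduction to $\widehat Y_1\times_B\widehat Y_2$ go through cleanly. In your version the hypothesis is in fact not used at all, so your argument actually shows the lemma without it.
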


\begin{proof}
Actually, we will show $(*)$ for $(ba_1a_2,b)$ which is equivalent by Lemma~\ref{image}. 
Let $X=\loc(ba_1a_2)$, $Y_i=\loc(ba_i)$ for $=1,2$, and $B=\loc(b)$.
So $X=Y_1\times_BY_2\subseteq Y_1\times Y_2$.
Suppose for the moment that the $Y_i$ are in fact algebraic $D$-varieties so that we can apply Proposition~\ref{prod} directly to them.
Then
\begin{eqnarray*}
T(X)_{(ba_1a_2)} &\subseteq& T(Y_1\times Y_2)_{(ba_1,ba_2)}\\
&\subseteq&
\dcl\big(T(Y_1)_{ba_1}, T(Y_2)_{ba_2},\C\big) \ \ \text{ by Proposition~\ref{prod}}
\end{eqnarray*}
The truth of $(*)$ for $(a_i,b)$ implies by Lemma~\ref{image} the truth of $(*)$ for $(ba_i,b)$, and hence by Lemma~\ref{anyvector}, the type of every element of $T(Y_i)_{ba_i}$ over $b$ is almost $\C$-internal.
It follows that in particular the generic type of $T(X)_{(ba_1a_2)}$ is almost $\C$-internal, as desired.

It remains to verify that we may assume the $Y_i$ are algebraic $D$-varieties, that is that they are of the form $Y_i=(V_i,s_i)^\sharp$.
By finite dimensionality there exist algebraic $D$-varieties $\widehat Y_i$ admitting morphisms to $B$ such that $X=Y_1\times_BY_2$ is $\Delta$-birationally equivalent to $\widehat Y_1\times_B\widehat Y_2$.
Since we are trying to prove something about the $\Delta$-jet space to $X$ at a generic point, it suffices to prove that statement for $\widehat Y_1\times_{B}\widehat Y_2$, instead.
\end{proof}

We now prove an important case of~$(*)$.

\begin{lemma}
\label{orthogonal}
Suppose $\stp(a/b)$ is almost orthogonal to $\C$. Then $(*)$ holds for $(a,b)$.
\end{lemma}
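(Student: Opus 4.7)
The plan is to use the geometric characterization of almost $\C$-internality (recalled in the introduction) to transfer the $\C$-structure of an auxiliary $\C$-algebraic variety $V$ to the $\Delta$-jet space $T(\loc(a))_a$, and then to conclude by forking calculus.

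First, I would invoke the geometric characterization on the hypothesis that $\stp(a/b)$ is almost $\C$-internal: there exist some parameter set $E$ with $E\ind_b a$, an irreducible algebraic variety $V$ over $\C$, and an irreducible $\Delta$-subvariety $\Gamma\subseteq \loc(a)\times V(\C)$ defined over $bE$ that projects generically finite-to-one and $\Delta$-dominantly onto each factor. At a generic $(a,w)\in\Gamma$ we have $w\in V(\C)\cap\acl(bEa)$ and $a\in\acl(bE,\C_0)$ for some finite $\C_0\subseteq\C$. The orthogonality hypothesis $a\ind_b\C$ enters here: by a standard extension-of-forking argument it is possible to choose $E$ with the strengthening $E\ind_b a\C$, and then a short forking calculation using $w\in\C\cap\acl(bEa)$ forces $w\in\acl(b)$.

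Next I would apply Fact~\ref{jetfacts}(b,c) to the two projections of $\Gamma$ at $(a,w)$: both $\jet^m_\Delta(\pi_1)$ and $\jet^m_\Delta(\pi_2)$ are $\C$-linear isomorphisms at this generic point, yielding
\[ T(\loc(a))_a \;\xleftarrow{\;\sim\;}\; T(\Gamma)_{(a,w)} \;\xrightarrow{\;\sim\;}\; T(V(\C))_w. \]
By Fact~\ref{jetfacts}(d), $T(V(\C))_w = \jet^m(V)_w(\C)$ is a $\C$-subspace of some $\C^n$ whose elements are constant tuples and which admits a $\dcl(\C,w)$-definable $\C$-basis (by row reduction over $\C$). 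Transporting this basis through the $bE(a,w)$-definable isomorphism yields a $\C$-basis $\{w_1,\ldots,w_d\}$ of $T(\loc(a))_a$ with each $w_i\in\acl(bEa,\C)$, which is $\acl(bE,\C)$ since $a\in\acl(bE,\C)$. Consequently every element of $T(\loc(a))_a$, being a $\C$-linear combination of this basis, lies in $\acl(bE,\C')$ for some finite $\C'\subseteq\C$.

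Finally I would take $E$ as the almost $\C$-internality witness for $\stp(v/b)$; it remains only to verify $E\ind_b v$. By Lemma~\ref{anyvector} it suffices to treat the generic $v$: realizing $v$ as a nonforking extension of the generic type of $T(\loc(a))_a$ over $abE$ (using stationarity, since this is the generic type of a finite-dimensional $\C$-vector space over $\acl(a)$) gives $v\ind_{ab}E$, which combined with $E\ind_b a$ yields $E\ind_b av$ and hence $E\ind_b v$. The main obstacle I expect is the delicate choice of $E$: one must obtain $E$ that simultaneously witnesses the almost $\C$-internality of $\stp(a/b)$ and is independent from the constants $w$ appearing in $\Gamma$. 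This is exactly where the orthogonality hypothesis is used, since without $a\ind_b \C$ one cannot in general strengthen $E\ind_b a$ to $E\ind_b a\C$, and the transported basis then drags uncontrolled $\C$-dependence into the picture, spoiling the concluding step.
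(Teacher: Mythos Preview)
Your argument has a genuine gap at exactly the point you flag as the main obstacle. The claim that one may strengthen $E\ind_b a$ to $E\ind_b a\C$ while retaining the finite-to-finite correspondence $\Gamma$ is false. Indeed, if it held then, as you compute, $w\in\C\cap\acl(bEa)$ together with $E\ind_b a\C$ and $a\ind_b\C$ forces $w\in\acl(b)$; but since also $a\in\acl(bEw)$ this gives $a\in\acl(bE)$, and then $E\ind_b a$ yields $a\in\acl(b)$. That is \emph{not} a consequence of the hypotheses: take $a$ generic in the group $G$ of Lemma~\ref{log} and $b:=\ell(a)\in\C$. Then $\stp(a/b)$ is $\C$-internal (the fibre $\{x:\delta x=bx\}$ is a $\Gm(\C)$-torsor) and almost orthogonal to $\C$ (any constant in $\acl(a)\setminus\acl(b)$ would force $a\in\acl(\C)$, contradicting that $G$ is not $\C$-algebraic), yet $a\notin\acl(b)$. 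The underlying problem is that $\C$ is not a small set: when you move $E$ to some $E'$ realising a nonforking extension over $ab\C$, the corresponding $w'$ lands in the constants of a larger model, and $E'\ind_b a\C$ no longer controls $E'\ind_b aw'$. There is also a second, independent problem: your $\Gamma$ is defined over $bE$, so its first projection is $\Delta$-dominant only onto $\loc(a/bE)=\loc(a/b)$, not onto $\loc(a)$; hence Fact~\ref{jetfacts}(c) gives $T(\Gamma)_{(a,w)}\cong T(\loc(a/b))_a$, which is in general a proper subspace of $T(\loc(a))_a$.

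The paper's proof avoids both issues by reversing the roles of the extra parameters and the constants. Rather than a correspondence over $bE$ with target in $\C$, it takes $Z:=\loc(ae/c)$ defined over a tuple of constants $c$ chosen algebraically independent over $\acl(e)$. Almost orthogonality is then used to deduce $a\ind c$ outright, so that $Z\to\loc(a)$ is genuinely $\Delta$-dominant; and $a\in\acl(ec)$ makes $Z\to\loc(e)$ generically finite-to-one. This yields a $cae$-definable surjection $T(\loc e)_e\twoheadrightarrow T(\loc a)_a$, and a $\C$-basis $\beta$ of $T(\loc e)_e$, chosen with $\beta e\ind_b a$, serves as the internality witness for $\stp(v/b)$.
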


\begin{proof}
As $\tp(a/b)$ is almost $\C$-internal, for some $e$ extending $b$, with
\begin{eqnarray}
\label{2}
a\ind_{b}e
\end{eqnarray}
and for some finite tuple of constants $c$, we have $a\in\acl(ec)$.
We may assume that $c$ is an algebraically independent tuple over $\acl(e)$, and hence
\begin{eqnarray}
\label{1}
c\ind e
\end{eqnarray}
In particular, $c\ind b$.
But by the almost orthogonality of $\stp(a/b)$ to $\C$, $a\ind_bc$.
This implies
\begin{eqnarray}
\label{3}
a\ind c
\end{eqnarray}

Now, let $X=\loc(a)$ and $Y=\loc(e)$.
Choose a (finite) $\C$-basis $\beta$ for $T(Y)_e$.
We may assume that $\beta\ind_e a$, and hence by~(\ref{2}),
\begin{eqnarray}
\label{4}
\beta e\ind_ba
\end{eqnarray}

Let $Z=\loc(ae/c)\subseteq X\times Y$.
By~(\ref{1}), $Y=\loc(e/c)$ and hence the projection $\pi_{Y}:Z\to Y$ is $\Delta$-dominant.
The fact that $a\in\acl(ec)$ implies that $\pi_{Y}$ is moreover generically finite-to-one.
By Fact~\ref{jetfacts}(c), it therefore induces a $cae$-definable linear isomorphism between $T(Z)_{ae}$ and $T(Y)_e$.
On the other hand, by~(\ref{3}), $X=\loc(a/ c)$, and so the projection $\pi_X:Z\to X$ is also $\Delta$-dominant, and we obtain a $cae$-definable surjective linear map from $T(Z)_{ae}$ to $T(X)_a$.
Putting these together we get a $cae$-definable surjective linear map from $T(Y)_e$ to $T(X)_a$.

To show $(*)$, take $v\in T(X)_a$ generic.
We may assume that $v\ind_{a}\beta e$.
So~(\ref{4}) implies
\begin{eqnarray*}
\label{5}
v\ind_b\beta e
\end{eqnarray*}
On the other hand,
\begin{eqnarray*}
v
&\in&
\dcl(cae,T(Y)_e) \ \ \ \  \text{by }T(Y)_e\to T(X)_a\\
&\subseteq&
\dcl(cae\beta\C) \ \ \ \ \text{as $\beta$ is a $\C$-basis for $T(Y)_e$}\\
&\subseteq&
\acl(e\beta\C) \ \ \ \ \text{as $a\in\acl(ec)$ and $c$ is from $\C$}
\end{eqnarray*}
That is, $\stp(v/b)$ is almost $\C$-internal, as desired.
\end{proof}

Our final lemma has nothing to do with jet spaces, and is simply a refinement of how internality to $\C$ can be witnessed in $\operatorname{DCF}_{0,\ell}$.

\begin{lemma}
\label{goode}
Suppose $a$ and $b$ are tuples of finite rank with $\stp(a/b)$ almost internal to $\C$.
Then there exist a tuple $e$ of finite rank and a tuple of constants $c$, such that
\begin{itemize}
\item[(i)]
$a\ind_b e$,
\item[(ii)]
$\acl(abe)=\acl(cbe)$,
\item[(iii)]
$\stp(e/b)$ is almost $\C$-internal and almost orthogonal to $\C$, and
\item[(iv)]
$c\ind be$
\end{itemize}
\end{lemma}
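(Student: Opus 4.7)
The strategy is to begin with any witness of the almost $\C$-internality of $\stp(a/b)$ and then to separate its ``orthogonal-to-$\C$'' content (which will become $e$) from content absorbable into a fresh independent tuple of constants (which will become $c$).

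\textbf{Steps 1--2 (Initial witness and constructing $e$).} By the almost $\C$-internality of $\stp(a/b)$, choose a finite-rank tuple $f\supseteq b$ with $a\ind_b f$ and a tuple of constants $c_0$ with $a\in\acl(fc_0)$; after trimming $c_0$ to a maximal $\acl$-independent sub-tuple over $f$ in the constant field, we may arrange $c_0\ind_b f$. Then choose $e\in\dcl^{\eq}(bf)$ maximal, up to $b$-interalgebraicity, such that $\stp(e/b)$ is almost orthogonal to $\C$; such a maximal $e$ exists by finite-rank considerations. The delicate part is to verify that $\stp(e/b)$ is also almost $\C$-internal---here we invoke the Canonical Base Property for $\operatorname{DCF}_{0,\ell}$, proved in~\cite{pillayziegler03} in the ordinary case and extended to the partial case in~\cite{omarthesis}, which guarantees that the orthogonal-to-$\C$ reduct of the ``$\C$-internal envelope'' of $a$ over $b$ retains almost-$\C$-internality. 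This establishes (iii), while (i), $a\ind_b e$, is immediate from $a\ind_b f$ and $e\in\dcl^{\eq}(bf)$.

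\textbf{Step 3 (Constructing $c$ and verifying (ii) and (iv)).} Let $c$ realise the non-forking extension of $\stp(c_0/b)$ to $\acl^{\eq}(abe)$. Since $\stp(c_0/b)$ is concentrated on the constant sort, $c$ is itself a tuple of constants, and $c\ind be$ by construction, giving (iv). For (ii), the maximality of $e$ from Step~2 implies that the part of $f$ lost when passing to $e$ lies inside $\acl(be,\C)$, so by an automorphism of the monster model fixing $be$ pointwise and sending $c_0$ to $c$ one transfers the relation $a\in\acl(fc_0)$ to $a\in\acl(f'c)$ with $f'\in\acl(be,c)$, hence $a\in\acl(bec)$; the reverse inclusion $c\in\acl(abe)$ follows by a dimension/interalgebraicity count together with the minimality of $c_0$ secured in Step~1.

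\textbf{Main obstacle.} The technical heart is the construction of $e$ in Step~2: arranging that the maximal orthogonal-to-$\C$ reduct of $f$ over $b$ remains almost $\C$-internal is not automatic and fundamentally leans on the Canonical Base Property. A secondary subtlety, in Step~3, is propagating this through to the full interalgebraicity (ii)---one must check that the ``lost'' portion of $f$, living by maximality of $e$ inside $\acl(be,\C)$, is genuinely coordinatised by the fresh independent constant tuple $c$ rather than by the original $c_0$.
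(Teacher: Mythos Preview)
Your proposal has genuine gaps in both Steps~2 and~3.

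\textbf{Step 2.} Your $e$ is the maximal part of an \emph{arbitrary} internality witness $f$ whose strong type over $b$ is almost orthogonal to $\C$. There is no reason this should be almost $\C$-internal over $b$: nothing prevents $f$ from containing, say, a generic point of a Manin kernel of a simple abelian variety with no CM, which is orthogonal to $\C$ and not almost $\C$-internal; such a point would land in your $e$. Your appeal to the Canonical Base Property is misplaced---CBP controls the type of a canonical base over the realisations, not the type of an arbitrary ``orthogonal-to-$\C$ reduct'' of a parameter set. The paper does \emph{not} use CBP here. Instead it first moves the witness into a prime model $M$ over $\acl(b)$; since $M$ is atomic and the generic type of $\C$ is non-isolated, $M$ adds no new constants to $\acl(b)$, which forces $\stp(e'/b)$ to be almost orthogonal to $\C$. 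Almost $\C$-internality is then recovered by replacing $e'$ with the canonical base $e:=\cb(\stp(ac'/be'))$, using only the standard fact that a canonical base lies in the algebraic closure of a Morley sequence in $\stp(ac'/be')$, each of whose realisations has almost $\C$-internal type over $b$.

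\textbf{Step 3.} Even granting Step~2, your automorphism argument does not yield (ii). An automorphism fixing $be$ and sending $c_0\mapsto c$ will move $a$ and $f$ as well, so you obtain $\sigma(a)\in\acl(\sigma(f)c)$, not $a\in\acl(bec)$. Moreover, ``the lost part of $f$ lies in $\acl(be,\C)$'' only says there exist \emph{some} constants $d$ with $f\in\acl(bed)$; it does not place $f$ (or $\sigma(f)$) inside $\acl(bec)$ for your specific independently chosen $c$. The paper avoids this entirely: once $e$ is the canonical base, one already has $\acl(abe)=\acl(c'be)$ from the correspondence $f_{e'}$, and (iv) is obtained simply by taking $c$ to be a transcendence basis of $\acl(c')$ over $be$.
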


\begin{proof}
Almost $\C$-internality of $\stp(a/b)$ gives us an $e_1$ such that $a\ind_b e_1$ and $a$ is interalgebraic with a tuple of constants over $be_1$.
So there is a $be_1$-definable generically finite-to-finite correspondence, say $f_{e_1}$, between $\loc(a/b)$ and the $\C$-points of an algebraic variety over $\C$.
Let $M$ be a prime model over $\acl(b)$, independent from $a$ over $b$.
Then we can find  a tuple $e'$ in $M$ such that $f_{e'}$ is a $be'$-definable generically finite-to-finite correspondence between $\loc(a/b)$ and the $\C$-points of an algebraic variety over $\C$.
So there is a tuple of constants $c'$ such that $(a,b,e',c')$ satisfy (i) and (ii).
Since $M$ is prime over $\acl(b)$ it adds no new constants to $\acl(b)$, and hence, $stp(e'/b)$ is almost orthogonal to $\C$.
Setting $e:=\cb(\stp(ac'/be'))$, everything so far is preserved, but now $\stp(e/b)$ is finite rank and almost $\C$-internal (as that is the case for $a$ and $c'$).
So $(a,b,c',e)$ satisfy~(i)--(iii).
Letting $c$ be a transcendence basis for $\acl(c')$ over $be$, we get~(iv) as well. 
\end{proof}

We can now put things together.

\begin{proof}[Proof of~$(*)$ in general]
We are given $b\in\acl(a)$ such that $\stp(a/b)$ is almost $\C$-internal.
Let $e$ and $c$ be as given by Lemma~\ref{goode}.

First, by~(iii) of~\ref{goode}, $\stp(e/b)$ is almost $\C$-internal and almost orthogonal to $\C$.
Hence~$(*)$ holds of $(be,b)$ by Lemma~\ref{orthogonal}.

Next we observe that $(*)$ holds for $(cb,b)$.
Note that  $b\notin\acl(c)$, so it does not make sense to ask whether~$(*)$ holds for $(c,b)$.
We let $v$ be generic in $T\big(\loc(cb)\big)_{cb}$ and we show directly that $\stp(v/b)$ is $\C$-internal.
By~(iv) of~\ref{goode},  $c\ind b$, so that $\loc(cb)=\loc(c)\times\loc(b)$.
Hence
\begin{eqnarray*}
T\big(\loc(c,b)\big)_{(c,b)} &=& T\big(\loc(c)\times \loc(b)\big)_{(c,b)}\\
&\subseteq&
\dcl\big(T\big(\loc(c)\big)_{c}, T\big(\loc(b)\big)_{b},\C\big) \ \ \text{ by Proposition~\ref{prod}}
\end{eqnarray*}
To be precise, in order to apply Proposition~\ref{prod}, we need to first make $\Delta$-birational changes so that the $\Delta$-varieties in question are the sharp points of algebraic $D$-varieties, but this can be done as explained in the proof of Lemma~\ref{fibreproduct}.
Now,  as $c$ is a tuple of constants, $\loc(c)=V(\C)$ where $V$ is an algebraic variety over $\C$, and hence, by Fact~\ref{jetfacts}(d),  $T\big(\loc(c)\big)_{c}$ is the set of $\C$-point of the algebraic jet space of $V$ at $c$.
So every element in $T\big(\loc(c)\big)_{c}$ is a tuple of constants.
We thus have
$$T\big(\loc(c,b)\big)_{(c,b)}\subseteq\dcl\big(T\big(\loc(b)\big)_{b},\C\big)$$
But differential jet spaces at a point are finite dimensional $\C$-vector spaces, so choosing a $\C$-basis $\beta$ for $T\big(\loc(b)\big)_{b}$, we have
$$T\big(\loc(c,b)\big)_{(c,b)}\subseteq\dcl(\beta,\C)$$
As $\beta$ can be chosen to be independent of $v$ over $b$, we have shown that $\stp(v/b)$ is $\C$-internal, as desired.
So~$(*)$ holds for $(cb,b)$.

Now, by Lemma~\ref{goode}(iv), $c\ind_be$.
Therefore putting the previous two cases together using Lemma~\ref{fibreproduct}, we get that~$(*)$ holds for $(cbe,b)$.
But $\acl(cbe)=\acl(ae)$ by~(i) of~\ref{goode}, so Lemma~\ref{image} implies that~$(*)$ holds of $(ae,b)$.
Finally, by Lemma~\ref{image} again, $(*)$ is then true of $(a,b)$, as desired.
\end{proof}

\bigskip
\section{A Counterexample}
\label{examplesect}

\noindent
Recall from the Introduction that {\em $\C$-algebraic} means being in generically finite-to-finite correspondence with the constant points of an algebraic variety, or equivalently that the generic type is almost $\C$-internal.
To paraphrase Corollary~\ref{jets-moishezon-geometric}, we have shown that if $X$ is a finite dimensional $\Delta$-variety that is covered by a family of $\C$-algebraic subvarieties, then the restriction of the $\Delta$-jet bundle of $X$ to a generic member of this family is again $\C$-algebraic.
One might expect something more robust: Is the restriction of the $\Delta$-jet bundle of $X$ to {\em any} $\C$-algebraic subvariety again $\C$-algebraic?
Indeed, our experience with the model theory of compact complex manifolds suggests that the answer should be ``yes":
It follows from GAGA that if $X$ is a compact complex space, $TX\to X$ is its tangent bundle, and $Z\subset X$ is a Moishezon\footnote{Being {\em Moishezon} means being bimeromorphic to a projective algebraic variety.
This may seem too strong to be an analogue of $\C$-algebraicity, but it turns out that a compact complex space admitting a generically finite-to-finite correspondence with a projective algebraic variety is Moishezon.}
 subvariety, then the restriction $(TX)|_Z$ is again Moishezon.
A similar statement holds true of higher order complex-analytic jet spaces also.\footnote{There are various closely related notions of ``jet space" in complex geometry, see~\cite{moosa} for an exposition of them.}
Somewhat surprisingly, therefore, this strengthening of Corollary~\ref{jets-moishezon-geometric} does not hold in differential-algebraic geometry:

\begin{proposition}
\label{exampleprop}
There exists an irreducible finite dimensional $\Delta$-variety $X$ defined over $\C$ such that the restriction of $T_\Delta X$ to $X(\C)$ is not $\C$-algebraic.
\end{proposition}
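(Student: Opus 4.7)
The plan is to take
\[ X := \{(x, y) \in K^2 : \partial x = 0,\ \partial y = xy\}, \]
presented as $(V, s)^\sharp$ with $V = \mathbb{A}^2$ and $s(x, y) = (x, y, 0, xy)$ (in the partial case one imposes $\partial_i x = 0$ for all $i$ and $\partial_j y = xy$, $\partial_i y = 0$ for $i \neq j$, for a fixed $j$). A quick calculation --- the quotient of the differential polynomial ring by the ideal $[\partial x, \partial y - xy]$ collapses to $\C[x, y]$, which is a domain --- shows that $X$ is irreducible, of Kolchin dimension $2$, and defined over $\C$. Its set of constant points is
\[ X(\C) = \{(x, y) \in \C^2 : xy = 0\} = (\C \times \{0\}) \cup (\{0\} \times \C), \]
and it will suffice to prove that the restriction of $T_\Delta X$ to the component $Z := \C \times \{0\}$ is not $\C$-algebraic.

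Using the recipe of $\S$\ref{prelims}, at a point $(a, 0) \in Z$ the induced $\Delta$-module structure on $\mathcal{M}_{V,(a,0)}/\mathcal{M}_{V,(a,0)}^2$ satisfies $d(\overline{x-a}) = 0$ and $d(\overline{y}) = \overline{xy} \equiv a \overline{y} \pmod{\mathcal{M}^2}$, which after dualising gives
\[ \jet^1_\Delta(X)_{(a,0)} \;=\; \{(v_1, v_2) \in K^2 : \partial v_1 = 0,\ \partial v_2 = a v_2\}. \]
Thus the generic point of $T_\Delta X|_Z$ has the form $((a, 0), (c, v))$ with $a, c \in \C$ generic independent constants and $v \in K$ a generic nonzero solution of $\partial y = ay$. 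Since the $c$-coordinate contributes only a $\C$-internal factor, it reduces to proving that the generic type of $Y := \{(a, v) : a \in \C,\ v \neq 0,\ \partial v = av\}$ is not almost $\C$-internal.

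This last assertion is the heart of the argument, and the main obstacle. If $\tp(a, v)$ were almost $\C$-internal, then the base-extended type $\tp(a, v/\C)$ would also be almost $\C$-internal, so there would exist $B \supset \C$ with $B \ind_{\C} (a, v)$ and $(a, v) \in \acl(B)$; in particular $v \in \acl(B)$ forces the $\aut(\UU/B)$-orbit of $v$ to be finite. Any $\sigma$ in this group fixes $a \in \C \subset B$ pointwise, so $\sigma(v)$ is another nonzero solution of $\partial y = ay$; hence $\sigma(v) = \mu v$ for some $\mu \in \C^*$. Conversely, every $\mu \in \C^*$ arises this way: the generic type of $\{y \neq 0 : \partial y = ay\}$ over $\C$ is unique (that set is a single $\C^*$-orbit of $\Delta$-rank $1$), so $\tp(v/\C) = \tp(\mu v / \C)$; since $\dcl(\C, v) = \dcl(\C, \mu v)$, the condition $B \ind_{\C} v$ yields $B \ind_{\C} \mu v$, and uniqueness of nonforking extensions gives $\tp(v/B) = \tp(\mu v / B)$. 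Homogeneity of the saturated universe then supplies $\sigma_\mu \in \aut(\UU/B)$ with $\sigma_\mu(v) = \mu v$, so the orbit of $v$ contains the infinite set $\C^* \cdot v$, contradicting $v \in \acl(B)$. Hence $Y$ is not $\C$-algebraic, and so $T_\Delta X|_{X(\C)}$ is not $\C$-algebraic either.
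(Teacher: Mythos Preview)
Your proof is correct, but both the example and the non-internality argument differ from the paper's. The paper takes $X$ defined by $\delta x = x^2 - y^2$, $\delta y = x(x-y)$, so that $X(\C)$ is the diagonal $\{(c,c):c\in\C\}$; it then shows that $(T_\delta X)|_{X(\C)}$ maps definably onto the group $G=\{x\neq 0:\delta(\delta x/x)=0\}$ and proves $G$ is not $\C$-algebraic via the structure of commutative linear algebraic groups (any extension of $\Ga$ by $\Gm$ splits) together with a degree argument. Your $Y$ is in definable bijection with this same $G$ (via $(a,v)\mapsto v$, inverse $v\mapsto(\partial v/v,\,v)$), so the two examples reduce to the same core obstruction, but your proof of non-$\C$-algebraicity is a direct Galois-style argument: over any independent $B\supseteq\C$ the $\C^\ast$-torsor structure forces the $\aut(\UU/B)$-orbit of $v$ to be all of $\C^\ast v$. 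This is more elementary and self-contained than the paper's route, avoiding the appeal to the classification of algebraic group extensions and to the fact that $\C$-algebraic groups are definably isomorphic to $\C$-points of algebraic groups. One small caveat: in your example $X(\C)=\{xy=0\}$ is reducible, and the paper defines $\C$-algebraic only for irreducible $\Delta$-varieties; you are implicitly using the natural extension (each component $\C$-algebraic), which is harmless but worth making explicit. Your restriction to $Z=\C\times\{0\}$ already exhibits an irreducible $\C$-algebraic subvariety with non-$\C$-algebraic $T_\Delta X|_Z$, which is precisely the ``robust'' statement the proposition is meant to refute.
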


Here, and throughout this section, $T_\Delta$ denotes Kolchin's $\Delta$-tangent bundle from~\cite{kolchin85}, which agrees with the first $\Delta$-jet bundle as defined in $\S$\ref{prelims} above.

Our example requires only a single derivation, and so to prove Proposition~\ref{exampleprop} we work in a sufficiently saturated model $(K,\delta)\models\operatorname{DCF}_0$ with constant field $\C$.

We will use the following fact that we expect is well known but for which we were unable to find a reference in the literature.

\begin{lemma}
\label{log}
Let $G\leq\mathbb G_{\operatorname m}$ be the subgroup defined by $\displaystyle \delta\left(\frac{\delta x}{x}\right)=0$.
Then $G$ is not $\C$-algebraic.
\end{lemma}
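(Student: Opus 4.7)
The plan is to assume that $G$ is $\C$-algebraic and to derive a contradiction from the equation $\delta x = cx$ satisfied by a generic $x \in G$ with $c := \delta x / x \in \C$. The setup is the logarithmic derivative $\pi : G \to (\C,+)$, $\pi(x) = \delta x / x$, which is a surjective definable group homomorphism with kernel $\C^{*} = \Gm(\C)$, giving a short exact sequence
\[
1 \to \C^{*} \to G \xrightarrow{\pi} (\C,+) \to 0.
\]
The idea is to use almost $\C$-internality to place a generic $x \in G$ inside the field-theoretic algebraic closure of a rational function field in independent constants, and then run a valuation-at-infinity argument in that rational function field.

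Take $x \in G$ generic. Almost $\C$-internality of $\tp(x)$ yields $e$ with $e \ind x$ and a tuple $\bar d$ of constants such that $x \in \acl(e, \bar d)$. Since $c \in \dcl(x) \subseteq \acl(e, \bar d)$, we may assume $c$ is a coordinate of $\bar d$. The key elementary observation is that a constant algebraic over a differential field $E$ is already algebraic over $E \cap \C$ (differentiate the monic minimal polynomial; the result has strictly smaller degree and vanishes at the same element, forcing all coefficients to be constants). Applying this, we may replace $\bar d$ by a transcendence basis over $\mathbb{Q}\langle e \rangle \cap \C$ containing $c$, so that $\bar d = (c, d_{1}, \ldots, d_{k})$ is algebraically independent over $\mathbb{Q}\langle e \rangle$. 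Hence $x \in F^{\alg}$, where $F := \mathbb{Q}\langle e \rangle(c, d_{1}, \ldots, d_{k})$.

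Now let $P(X) = X^{n} + a_{n-1} X^{n-1} + \cdots + a_{0} \in F[X]$ be the minimal polynomial of $x$ over $F$. Applying $\delta$ to $P(x) = 0$ and substituting $\delta x = cx$ yields another polynomial of degree $n$ vanishing at $x$; comparing with $P$ by uniqueness of the minimal polynomial (after normalizing the leading coefficient $cn$) forces $\delta a_{0} = cn \cdot a_{0}$, where $a_{0} \neq 0$ because otherwise $X \mid P$ would contradict $x \neq 0$ and the minimality of $P$. Finally, view $F = L(c)$ with $L := \mathbb{Q}\langle e \rangle(d_{1}, \ldots, d_{k})$ and $c$ transcendental over $L$ (using the elementary observation a second time). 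For any $a = p(c)/q(c) \in F^{*}$ with $p, q \in L[c]$ coprime, we have $\delta a / a = \delta p / p - \delta q / q$; since $\delta$ acts trivially on $c$ and hence does not raise the $c$-degree of any polynomial, both $\delta p / p$ and $\delta q / q$ have valuation $\geq 0$ at $c = \infty$, whence so does $\delta a / a$. But $v_{\infty}(cn) = -1$, contradicting $\delta a_{0} / a_{0} = cn$.

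The main obstacle lies in the reduction step: ensuring that $\bar d$ can be arranged as an algebraically independent tuple of constants over $\mathbb{Q}\langle e \rangle$ containing $c$, so that the function field $F = L(c)$ has the required structure. Once this is in place, the concluding minimal-polynomial and valuation computation is a classical differential-Galois-style identity, essentially the standard proof that the equation $y' = cy$ has no nonzero algebraic solutions over a field of constants.
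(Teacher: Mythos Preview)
Your argument is correct and takes a genuinely different route from the paper's proof.

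The paper invokes Pillay's characterisation that a finite dimensional $\C$-algebraic group must be definably group-isomorphic to $H(\C)$ for some algebraic group $H$ over $\C$; together with stable embeddedness of $\C$ and the classification of commutative linear algebraic groups this forces $H=\Gm\times\Ga$, whence a generic $a\in G$ is a rational function $P(c,d)/Q(c,d)$ of two independent constants with $\ell(a)=d$. Applying the logarithmic derivative to $Qa=P$ and using that $c,d$ are constants yields the polynomial identity $PQ\,y=P^{\delta}Q-Q^{\delta}P$, and comparing $y$-degrees gives a contradiction.

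You bypass the group-theoretic input entirely. You use only the raw definition of almost $\C$-internality to place a generic $x\in G$ into $F^{\alg}$ with $F=L(c)$, where $c=\ell(x)$ is transcendental over the differential field $L$; differentiating the minimal polynomial of $x$ and comparing forces $\delta a_0=nc\,a_0$ for the constant term, and a valuation-at-infinity computation in $L(c)$ finishes. This is closer to a classical Kolchin-style calculation (no algebraic solution of $y'=cy$) and has the virtue of being entirely elementary: no appeal to Pillay's theorem or to the structure theory of linear algebraic groups is needed. The paper's approach, by contrast, is more structural and explains conceptually why $G$ cannot be $\C$-algebraic in terms of the non-splitting of the extension $1\to\Gm(\C)\to G\to\Ga(\C)\to 0$.

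One small point on your reduction step: you take a transcendence basis of the constants over $\mathbb{Q}\langle e\rangle\cap\C$ and then assert algebraic independence over $\mathbb{Q}\langle e\rangle$. This is true, but the one-variable observation you state does not immediately iterate (one must control $E(d_1,\dots,d_{j-1})\cap\C$). A direct argument works: take a nonzero $P\in\mathbb{Q}\langle e\rangle[X_0,\dots,X_k]$ vanishing at $(c,d_1,\dots,d_k)$ with the fewest monomials and one coefficient equal to $1$; then $P^{\delta}$ also vanishes there with strictly fewer monomials, so $P^{\delta}=0$ and $P$ has constant coefficients. Alternatively, and even more simply, you can just take the transcendence basis directly over $\mathbb{Q}\langle e\rangle$, since you have already observed that $c\ind e$ forces $c$ to be transcendental over $\mathbb{Q}\langle e\rangle$; then the elementary observation is not needed at this stage.
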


\begin{proof}
Let us denote by $\ell(x):=\frac{\delta x}{x}$ the logarithmic derivative operator, which is a group homomorphism from $\Gm$ to $\Ga$.
Restricting $\ell$ to $G$ we have the short exact sequence of definable group homomorphisms
$$\xymatrix{1\ar[r] &\Gm(\C)\ar[r] &G\ar[r]^{\ell \ \ \ \ } &\Ga(\C)\ar[r]&0}$$
Now, a finite dimensional definable group is $\C$-algebraic if and only if it is definably group isomorphic to the $\C$-points of an algebraic group over $\C$ -- see Corollary~3.10 of~\cite{pillay06}.
We may therefore assume, toward a contradiction, that $G$ is definably isomorphic to $H(\C)$ for some algebraic group $H$ over $\C$.
By the fact that $\C$ is stably embedded, the above exact sequence shows that $H$ is an extension of $\Ga$ by $\Gm$.
But by the structure of commutative linear algebraic groups, all such extensions are split, and so $H=\Gm\times\Ga$.
We therefore have a definable isomorphism
$$\xymatrix{
\Gm(\C)\times\Ga(\C)\ar[rr]^{ \ \ \ \ \approx}\ar[dr]&&G\ar[dl]^{\ell}\\
&\Ga(\C)
}$$
The upshot is that there is a $\delta$-field $L$, a generic point $(c,d)\in\Gm(\C)\times\Ga(\C)$ over $L$, and a rational function $f(x,y)$ over $L$, such that $a:=f(c,d)\in G$, and $\ell(a)=d$.

Let us write $f=\frac{P}{Q}$ where $P,Q\in L[x,y]$.
So we have
$$Q(c,d)a=P(c,d)$$
Applying $\ell$ to both sides, and using that $\ell(a)=d$, we get
$$\ell\big(Q(c,d)\big)+d=\ell\big(P(c,d)\big)$$
But as $c$ and $d$ are constant tuples, this becomes
$$\frac{Q^{\delta}(c,d)}{Q(c,d)}+d=\frac{P^{\delta}(c,d)}{P(c,d)}$$
and hence
$$P(c,d)Q(c,d)d=P^{\delta}(c,d)Q(c,d)-Q^{\delta}(c,d)P(c,d)$$
This is an algebraic relation over $L$ holding for the generic point $(c,d)$, and hence holding identically.
So
$$P(x,y)Q(x,y)y=P^{\delta}(x,y)Q(x,y)-Q^{\delta}(x,y)P(x,y)$$
The degree of the right-hand-side in $y$ is bounded by $\deg_yP+\deg_yQ$, whereas on the left-hand-side we have $\deg_yP+\deg_yQ+1$.
This contradiction proves the lemma.
\end{proof}

Now we turn to the construction of the $\delta$-variety $X$ whose existence is asserted in Proposition~\ref{exampleprop}.
Let $X\subset K^2$ be the $\delta$-variety defined by the equations
\begin{eqnarray*}
\delta x&=&x^2-y^2\\
\delta y&=&x(x-y)
\end{eqnarray*}
and let $Z:=X(\C)=\{(c,c):c\in \C\}$.
Note that $X$ is irreducible and finite dimensional as it is the set of sharp points of the algebraic $D$-variety $(\mathbb A^2,s)$ where $s(x,y):=(x,y,x^2-y^2,x^2-xy)$.
We want to show that $(T_{\delta}X)|_Z$ is not $\C$-algebraic.

An easy computation using the formulae given in Chapter~VIII~$\S2$ of Kolchin~\cite{kolchin85} for the differential tangent bundle shows that $T_\delta X\subset K^4$ is given by
\begin{eqnarray*}
\delta x&=&x^2-y^2\\
\delta y&=&x(x-y)\\
\delta u&=& 2(xu-yv)\\
\delta v &=& 2xu-yu-xv
\end{eqnarray*}
Restricting to $Z$ we have that $(T_{\delta}X)|_Z\subset K^4$ is defined by
\begin{eqnarray*}
x&=&y\\
\delta x&=&0\\
\delta u&=& 2x(u-v)\\
\delta v &=& x(u-v)
\end{eqnarray*}

\begin{claim}
\label{noninternalimage}
There exists a partial $0$-definable function on $(T_{\delta}X)|_Z$ whose image is the group $G$ from Lemma~\ref{log}.
\end{claim}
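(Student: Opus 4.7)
The natural candidate for the function is $f(x,y,u,v) := u - v$, restricted to the open subset where $u \neq v$. This is clearly a partial $0$-definable function on $(T_\delta X)|_Z$, so the work is to identify its image with $G$.

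\textbf{Image lands in $G$.} Set $w := u - v$. From the defining equations of $(T_\delta X)|_Z$ we compute
\begin{equation*}
\delta w \;=\; \delta u - \delta v \;=\; 2x(u-v) - x(u-v) \;=\; xw.
\end{equation*}
Hence whenever $w \neq 0$ the logarithmic derivative is $\ell(w) = \delta w / w = x$. Since $\delta x = 0$ on $Z$, we get $\delta(\delta w/w) = \delta x = 0$, so $w \in G$.

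\textbf{Every element of $G$ is hit.} Given $g \in G$, set $c := \ell(g) = \delta g / g \in \C$. Using that $K \models \operatorname{DCF}_0$ (so $\delta$ is surjective), pick any $v \in K$ with $\delta v = c g$, and put $u := v + g$, $x := y := c$. Then $\delta x = \delta y = 0$, $x = y$, and
\begin{equation*}
\delta u = \delta v + \delta g = cg + cg = 2c g = 2x(u-v), \qquad \delta v = cg = x(u-v),
\end{equation*}
so $(x,y,u,v) \in (T_\delta X)|_Z$, it lies in the domain of $f$, and $f(x,y,u,v) = g$.

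Combining these two directions shows $f$ surjects onto $G$, which establishes the claim. The main thing to watch is the little arithmetic in the forward direction — that the coefficients $2$ and $1$ in the equations for $\delta u, \delta v$ are what makes $\delta w$ a scalar multiple of $w$ rather than something more complicated; this is precisely the reason the equations for $X$ were chosen as they were. Combined with Lemma~\ref{log}, the claim yields Proposition~\ref{exampleprop}: if $(T_\delta X)|_Z$ were $\C$-algebraic, so would be the image of any partial $0$-definable function on it, contradicting the non-$\C$-algebraicity of $G$.
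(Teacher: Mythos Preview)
Your proof is correct and follows essentially the same route as the paper: the partial function is $f(x,y,u,v)=u-v$ on the locus $u\neq v$, and the forward computation $\delta(u-v)=x(u-v)$ is identical. The only difference is cosmetic: for surjectivity the paper exhibits the explicit preimage $(c,c,2g,g)$, so there is no need to invoke surjectivity of $\delta$ to solve $\delta v = cg$.
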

\begin{proof}[Proof of Claim~\ref{noninternalimage}]
The equations for $(T_{\delta}X)|_Z$ given above imply that for $u\neq v$, $\displaystyle \delta\left(\frac{\delta (u-v)}{u-v}\right)=0$.
Hence, if we set $W:=(T_{\delta}X)|_Z\cap\{(x,y,u,v):u\neq v\}$, and $f:K^4\to K$ to be the $0$-definable function $(x,y,u,v)\mapsto u-v$, then $f(W)\subseteq G$.

On the other hand, to see that $f$ maps onto $G$, suppose $g\in G$ and let $a\in\C$ be such that $\frac{\delta g}{g}=a$.
Then it is easy to check that $(a,a,2g,g)\in (T_{\delta}X)|_Z$ and $f(a,a,2g,g)=g$.
\end{proof}

Since $G$ is not $\C$-algebraic by Lemma~\ref{log}, the Claim implies that $(T_{\delta}X)|_Z$ is not $\C$-algebraic either.
This completes the proof of Proposition~\ref{exampleprop}.
\qed

\begin{remark}
It may be worth pointing out that Proposition~\ref{exampleprop} gives also an example of another phenomenon that may be of independent interest:  {\em There exists a finite dimensional $\C$-linear space $U\to V$, with $V\subset\C^n$, but such that $U$ is not $\C$-algebraic.}
Here, given a $\Delta$-variety $V$, by a ``$\C$-linear space over $V$" we mean the relative notion of a $\C$-vector space; that is, a surjective $\Delta$-morphism $U\to V$, equipped with $\Delta$-morphisms $+,\lambda,z$
where
\begin{itemize}
\item
the following diagram commutes
$$\xymatrix{
U\times_VU\ar[rr]^+\ar[dr]&& U\ar[dl]\\
&V
}$$
\item
the following diagram commutes
$$\xymatrix{
\C\times U\ar[rr]^{\lambda}\ar[dr]&& U\ar[dl]\\
&V
}$$
\item
$z:V\to U$ is  a section to $U\to V$;
\end{itemize}
such that for all $a\in V$, the fibre $U_a$ is a $\C$-vector space with addition $+_a$, zero $z(a)$ and scalar multiplication $\lambda_a$.
\end{remark}

\begin{proof}
Let $X$ be as in Proposition~\ref{exampleprop}, set $V:=X(\C)$ and $U:=(T_{\Delta}X)|_{X(\C)}$.
Then $U$ is a non-$\C$-algebraic finite dimensional $\C$-linear space over $V\subset\C^n$.
\end{proof}


\end{document}